\newtheorem{theorem}{Theorem}[section]
\newtheorem{lemma}[theorem]{Lemma}
\newcommand{\cw}{\overset{d}{\rightarrow}}
\newcommand{\cp}{\overset{P}{\rightarrow}}
\newcommand{\bbet}{\boldsymbol{\beta}}
\newcommand{\hbbet}{\hat{\boldsymbol{\beta}}}
\begin{document}

\title{Asymptotic Statistical Properties of Redescending M-estimators in Linear Models with Increasing Dimension}

\author[1]{Ezequiel Smucler}

\affil[1]{Instituto de Calculo, Universidad de Buenos Aires - CONICET}

\newcommand{\Addresses}{{
  \bigskip
  \footnotesize
\textsc{\\
Instituto de C\'alculo \\ Universidad de Buenos Aires \\ Ciudad Universitaria, Pabell\'on 2\\ Buenos Aires 1426\\ Argentina \\ \url{esmucler@ic.fcen.uba.ar}}\par\nopagebreak
 
}}

\date{}

\maketitle

\begin{abstract}
This paper deals with the asymptotic statistical properties of a class of redescending M-estimators in linear models with increasing dimension. This class is wide enough to include popular high breakdown point estimators such as S-estimators and MM-estimators, which were not covered by existing results in the literature. We prove consistency assuming only that $p/n \rightarrow 0$ and asymptotic normality essentially if $p^{3}/n \rightarrow 0$, where $p$ is the number of covariates and $n$ is the sample size.
\end{abstract}
%
{\bf Keywords:} Robust Regression, M-estimators, S-estimators, MM-estimators, Dimension Asymptotics.

\section{Introduction}
\label{sec:Intro}
The growing number of statistical problems with a large
number of parameters has motivated the study of the asymptotic properties of estimators for statistical
models with a number of parameters that diverges with the sample size. For the case of linear regression, consider a sequence of regression models
\begin{equation}
y_{i,n} = \mathbf{x}_{i,n}^{T} \boldsymbol{\beta}_{0,n} + u_{i,n},\:\:\:
1\leq i \leq n  \nonumber
\end{equation}
where $y_{i,n} \in \mathbb{R}$, $\mathbf{x}_{i,n} \in \mathbb{R}^{p_n}$ is a vector of fixed predictor variables, $%
\boldsymbol{\beta}_{0,n} \in \mathbb{R}^{p_n}$ is to be estimated and $u_{i,n}$ are i.i.d. random variables defined in a common probability space with distribution function $F_0$. We consider the case in which $p_n$ may tend to infinity with $n$ at a certain rate. To unburden the notation, we will drop the $n$ subscript from $y_{i,n}, \mathbf{x}_{i,n}$, $\boldsymbol{\beta}_{0,n}$, $p_n$ and $
u_{i,n}$.

It is well known that the Least Squares estimator of $\bbet_0$ is not robust. This fact has led to the development of robust estimators.
A general framework for estimation in the linear model is provided by M-estimators. The notion of an M-estimator was first introduced in the landmark paper \citep{Huber64} for the case of the estimation of a location parameter and extended to the linear model in \citep{Huber73}. Given a suitably chosen loss function $\rho$, the corresponding regression M-estimator is defined by
\begin{equation}
\hbbet=\arg\min_{\bbet \in \mathbb{R}^{p}} \sum\limits_{i=1}^{n} \rho\left(\frac{r_{i}(\bbet)}{
s_n} \right),
\label{Eq:M}
\end{equation}
where $r_i(\bbet)=y_i - \mathbf{x}_{i}^{T}\bbet$ and $s_n$ is an estimate of scale of the residuals that may be estimated a priori or simultaneously. For example, $s_n$ could be the median of the absolute values of the residuals of some initial regression estimator. 

For the case of a convex and differentiable loss function, \eqref{Eq:M} is essentially equivalent to
\begin{equation}
\sum\limits_{i=1}^{n} \psi\left(\frac{r_{i}(\hbbet)}{
s_n} \right)\mathbf{x}_{i}=\mathbf{0},
\label{Eq:Mzero}
\end{equation}
where $\psi = \rho^{\prime}$; see Section 7.3 of \citep{Huber81} and Section 4.4 of \citep{Libro}. In this case, the resulting M-estimator is called a monotone regression M-estimator. When $\psi$ tends to zero at infinity the resulting estimator is called a redescending regression M-estimator and in this case some solutions of \eqref{Eq:Mzero} may not correspond to solutions of \eqref{Eq:M}.

The robustness of an estimator is measured by its stability when a small fraction of the observations is arbitrarily replaced by outliers
that may not follow the assumed model. A robust estimator should not be
much affected by a small fraction of outliers. A popular quantitative measure of an estimator's robustness, introduced
by \citep{Donoho-Breakdown}, is the finite-sample replacement breakdown point. 
Very loosely speaking, the finite-sample replacement breakdown point of an estimator is the maximum fraction of outliers that the estimator may tolerate without losing all meaning.
For a regression estimator, this measure is defined as follows. Given a sample
$\mathbf{z}_{i}=(\mathbf{x}_{i}^{\text{T}},y_{i})$, $i=1,...,n$, let
$\mathbf{Z}=\{\mathbf{z}_{1},...,\mathbf{z}_{n}\}$ and let $\hat
{\bbet}(\mathbf{Z})$ note the regression estimator $\hbbet$ computed in $\mathbf{Z}$. The finite-sample
replacement breakdown point of $\hat{\bbet}$ is then defined as
$FBP(\hat{\bbet})=m^{\ast}/n$, where $
m^{\ast}=\max \left\{m\geq0:\hat{\bbet}(\mathbf{Z}_{m})\text{ is
bounded for all }\mathbf{Z}_{m}\in\mathcal{Z}_{m}\right\}$
and $\mathcal{Z}_{m}$ is the set of all datasets with at least $n-m$ elements
in common with $\mathbf{Z}$. It can be shown that any regression equivariant estimator has a breakdown point of at most $[(n-p)/2]/n$, which is approximately 1/2 for $p\ll n$. See, for example, Section 5.4.1 of \citep{Libro}.

It can be shown that, if we only entertain the possibility of outliers in the response variable, monotone M-estimators defined by \eqref{Eq:Mzero} with a bounded $\psi$ may have a high breakdown point. This holds, for example, for the cases of one-way or two-way ANOVA designs; see Section 4.6 of \citep{Libro}. However, if outliers in the predictor variables are a possibility, the breakdown point of monotone regression M-estimators is zero; see Section 5.16.1 of \citep{Libro}. Moreover, monotone regression M-estimators may be highly inefficient when the errors are heavy tailed. Excellent discussions of the robustness properties of regression M-estimators can be found in \citep{Huber81}, \citep{HampelLibro} and \citep{Libro}.

A brief history of the study of the asymptotic properties of estimators for linear regression models with a diverging number of parameters when $p/n \rightarrow 0$ goes as follows. To the best of our knowledge, the first analysis of this problem appears in \citep{Huber73}. In \citep{Huber73}, Huber studied the asymptotic properties of monotone regression M-estimators defined without using an estimate of scale. Motivated by problems in X-ray crystallography, Huber proposed to study the properties of these estimators when $p=p_n \rightarrow \infty$. He proved the asymptotic normality of linear contrasts of these estimators when $p^{3}/n \rightarrow 0$. This result was improved by \citep{Maronna79}, who, under essentially the same hypothesis as \citep{Huber73}, proved the asymptotic normality of linear contrasts requiring only $p^{5/2}/n \rightarrow 0$ and also the $\sqrt{n/p}$-consistency assuming $p^{2}/n \rightarrow 0$. \citep{Maronna79} also provided analogous results for the case of monotone M-estimators defined using an estimate of scale. In \citep{Portnoy} and \citep{Portnoy85}, Portnoy studied the asymptotic properties of the solutions of M-estimating equations, \eqref{Eq:Mzero}, without including an estimate of scale and where the loss function is not necessarily convex. For the case of a convex loss function, Portnoy proved the $\sqrt{n/p}$-consistency of the estimators and the asymptotic normality of linear contrasts, requiring that $(p \log p) /n \rightarrow 0$ and $(p \log n)^{3/2} /n \rightarrow 0$ respectively. For the case of a non-convex loss function, Portnoy showed that there exists a solution of \eqref{Eq:Mzero} that is consistent, however, this solution may not be a solution of \eqref{Eq:M}. \citep{Mammen88} obtained asymptotic expansions for the solutions of \eqref{Eq:Mzero}, without including an estimate of scale and when the loss function is convex, assuming only $p^{3/2} \log n /n \rightarrow 0$. For the cases in which the loss function is not convex or a scale is estimated simultaneously, \citep{Mammen88} showed that there exists a solution of \eqref{Eq:Mzero} for which an analogous asymptotic expansion holds, however, this solution may not be a solution of \eqref{Eq:M}. \citep{Welsh} obtained results under more relaxed assumptions on the regularity of the loss function $\rho$ but under more stringent conditions on the rate of growth of $p$. \citep{BaiI} and \citep{BaiII} further improved the aforementioned results by relaxing the regularity conditions imposed on $\rho$ or the rate of growth of $p$. For example, for the case of a sufficiently smooth and convex loss function, they proved the consistency and asymptotic normality of M-estimators assuming $p/n \rightarrow 0$ and $p^2/n \rightarrow 0$ respectively. \citep{HE2000120} studied M-estimators of general parametric models with increasing dimension. For the case of a linear model, they obtained results similar to those of \citep{Welsh}.

More recently, \citep{Karoui}, \citep{Karoui2}, \citep{Donoho-Montanari}, \citep{Donoho-MontanariH} and \citep{Ritov} have studied the asymptotic properties of monotone M-estimators when $p/n\rightarrow m \in (0,1)$.

None of the aforementioned results are directly applicable to M-estimators defined using a bounded loss function or to high-breakdown point estimators such as S-estimators (\citep{S}) or MM-estimators \citep{MM87}; the precise definition of these estimators will be stated in the following section, since it is somewhat technical. \citep{Davies} proved the consistency of regression S-estimators assuming $(p \log n) /n \rightarrow 0$.

In this paper, we prove consistency and asymptotic normality results for a class of redescending M-estimators that is wide enough to include both S and MM-estimators. More precisely, we prove the consistency of the estimators under very general assumptions and requiring only that $p/n \rightarrow 0$ and we prove their asymptotic normality essentialy when $p^{3}/n \rightarrow 0$. Since S and MM-estimators are two of the most popular robust regression methods, and since the asymptotic results in linear models with increasing dimension that are available in the literature, except for the consistency result of \citep{Davies}, are not applicable to them, we consider our results to be a valuable contribution to the theory of robust regression.

The rest of this paper is organized as follows. In Section \ref{Section-High} we state the definitions of S and MM-estimators and show how they can be cast as redescending M-estimators defined using a suitably chosen scale estimate. In Section \ref{Section-DefAs} we state the assumptions needed to prove our results. Furthermore, we compare our assumptions with those previously
considered in the literature. In Section \ref{Section-Res} we state and prove our main results. Section \ref{Section-App} is a technical appendix containing technical lemmas that are needed in the proofs of our main results.

\section{High breakdown point regression estimators}
\label{Section-High}

Throughout this paper, we will say that $\rho$ is a $\rho$-function if: $\rho$ is even and continuous, $\rho(x)$ is a nondecreasing function of $\vert x \vert$,  $\rho(0)=0$, if $\rho(v)< \lim_{x\rightarrow \infty}\rho(x)$ and $0\leq u<v$ then $\rho(u)<\rho(v)$, if $\rho$ is bounded, $\lim_{x\rightarrow \infty}\rho(x)=1$. If $\rho$ is a differentiable $\rho$-function we will set $\psi=\rho^{\prime}$.

A popular family of $\rho$-functions is Tukey's Bisquare family of loss functions, given by $
\rho^{B}_{c}(x)=1-\left(1-\left( x/c \right)^2\right)^3 I\lbrace \vert x \vert \leq c\rbrace$,
where $c>0$ is some tuning constant. Note that $\rho^{B}_{c}$ is bounded, moreover, if $\vert x \vert \geq c$, then $\rho^{B}_{c}(x)=1$.

To define S and MM-estimators, we first need to define M-estimators of scale.
Let $\rho_0$ be a bounded $\rho$-function. Given a sample $\mathbf{u}=(u_{1},...,u_{n})$ and $0<b<1$ the corresponding
M-estimate of scale $s_{n}(\mathbf{u})$ is defined, \citep{Huber81}, by
\begin{equation}
s_{n}(\mathbf{u})=\inf\left\{  s>0:\frac{1}{n}\sum\limits_{i=1}^{n}%
\rho_0\left(  \frac{u_i}{s}\right)  \leq b\right\}  . \label{Eq:defMscale}
\end{equation}
It is easy to prove that $s_{n}(\mathbf{u})>0$ if and only if $\#\{i:u_{i}%
=0\}<(1-b)n,$ and in this case%
\begin{equation}
\frac{1}{n}\sum\limits_{i=1}^{n}\rho_0\left(  \frac{u_{i}}{s_{n}(\mathbf{u}%
)}\right)  =b. \nonumber%
\end{equation}

S-estimators, introduced in \citep{S}, are regression estimators that can be tuned to have a high breakdown point. They are defined by
\begin{align*}
\hbbet_{S}=\arg\min_{\bbet\in\mathbb{R}^{p}} s_{n}(\mathbf{r}(\bbet))
\end{align*}
where $\mathbf{r}(\bbet)=\left(r_{1}(\bbet),\dots,r_{n}(\bbet) \right)$ and $s_n()$ is an M-estimator of scale. It is easy to verify that S-estimators are scale and regression equivariant.
Let $\hat{s}_n=s_{n}(\mathbf{r}(\hbbet_S))$ and let $\rho_0$ be the $\rho$-function used to define $s_n()$. Then, S-estimators satisfy
\begin{align*}
\hbbet_{S}=\arg\min_{\bbet\in\mathbb{R}^{p}} \sum\limits_{i=1}^{n} \rho_0 \left( \frac{r_i(\bbet)}{\hat{s}_{n}}\right),
\end{align*}
see Section 5.6.1 of \citep{Libro}. Hence, S-estimators are M-estimators in the sense of \eqref{Eq:M}, where the loss function $\rho$ is bounded and the scale is estimated simultaneously. In practice, $\rho_0$ is usually chosen so that it satisfies $\rho_0(x)=1$ if $\vert x \vert\geq m$ for some $m$. For example, $\rho_0$ could be Tukey's Bisquare loss. For the case of fixed $p$, the asymptotic distribution of
regression S-estimators was derived, under very general conditions, by \citep{Fasano} for the case of random predictors and by \citep{Davies} for the case of fixed predictors. S-estimators can always be tuned so as to attain the maximum possible
finite-sample replacement breakdown point for regression equivariant estimators; see Section 5.6.1 of \citep{Libro}. However, S-estimators cannot combine a high breakdown point with a high
efficiency at the normal distribution, see \citep{Hossjer}.
 
MM-estimators, introduced in \citep{MM87}, are regression estimators that can
be tuned to attain both a high breakdown point and an arbitrarily high asymptotic efficiency at
the normal distribution. Suppose $\hbbet_1$ is a highly robust, but not necessarily highly efficient, initial estimator. In practice, $\hbbet_1$ will usually be an S-estimator. Let $s_n()$ be an M-estimator of scale defined using a bounded $\rho$-function $\rho_0$ and $b$. Let $\rho_1$ be another $\rho$-function that satisfies $\rho_1\leq \rho_0$. Then the MM-estimator is defined by
\begin{align*}
\hbbet_{MM}=\arg\min_{\bbet\in\mathbb{R}^{p}}  \sum\limits_{i=1}^{n} \rho_1\left(\frac{r_{i}(\bbet)}{
s_n(\mathbf{r}(\hbbet_1))} \right).
\end{align*}
Note that MM-estimators are M-estimators, as in \eqref{Eq:M}, defined using a bounded loss function and a preliminary estimate of scale. MM-estimators are scale and regression equivariant whenever $\hbbet_1$ satisfies these properties.  We note that the original definition of MM-estimators is actually more general, but for technical convenience we will work with this definition. 
\citep{MM87} proved the strong consistency and asymptotic normality of MM-estimators for the case of fixed $p$ and random predictor variables.

MM-estimators are the robust regression estimators of choice of \citep{Libro}. The authors recommend the use of an S-estimator with maximal breakdown point as the initial estimator when computing MM-estimators. The resulting MM-estimator will also have maximal breakdown point. They recommend taking $\rho_0 = \rho^{B}_{c_{0}}$ and $\rho_1 = \rho^{B}_{c_{1}}$ with $c_1\geq c_0$, where $\rho^{B}_{c}$ is Tukey's Bisquare loss and $c_1$ and $c_0$ are suitably chosen tuning constants.
The fact that MM-estimators can be tuned to attain both a high breakdown point and an arbitrarily high asymptotic efficiency at the normal distribution has made them one of the most popular alternatives robust regression has to offer.

\section{Definitions and assumptions}
\label{Section-DefAs}

In what follows, we will consider 
\begin{align*}
L_n(\bbet) = \sum\limits_{i=1}^{n} \rho_1\left(\frac{r_{i}(\bbet)}{s_n} \right),
\end{align*}
where $\rho_1$ is a bounded $\rho$-function, $s_n$ is a positive random variable defined in the same probability space as the errors $u_i$ and 
\begin{equation}
\hbbet=\arg\min_{\bbet \in \mathbb{R}^{p}} L_n(\bbet).
\label{Eq:M2}
\end{equation}
We will let $s_{n}^{S}()$ be the M-estimator of scale defined using a bounded $\rho$-function $\rho_0$ and $b\in(0,1)$.
We will assume that $s_n$, the random variable used to standardize the residuals in \eqref{Eq:M2}, converges in probability to some value $s_0>0$. For example, according to Lemma \ref{lemma-conv-sca} and the comments following it, one may take $s_n=s_{n}^{S}(\mathbf{r}(%
\hat{\boldsymbol{\beta}}_{S}))$, where $\hbbet_{S}$ is the S-estimator that minimizes the M-estimator of residual scale $s_{n}^{S}(\mathbf{r}(\bbet))$. Note that when $\rho_1=\rho_0$ and $s_n=s_{n}^{S}(\mathbf{r}(\hat{\boldsymbol{\beta}}_{S}))$, $\hbbet$ is an S-estimator, whereas when $\rho_1 \leq \rho_0$ and $s_n=s_{n}^{S}(\mathbf{r}(\hat{\boldsymbol{\beta}}_{S}))$, $\hbbet$ is an MM-estimator.

We introduce some notation. Let $\rho_{1,n}$ and $\rho_{2,n}$ stand for the smallest and largest
eigenvalues of $\boldsymbol{\Sigma}_n = (1/n) \sum\limits_{i=1}^{n} \mathbf{x}_{i}%
\mathbf{x}_{i}^{T}$. We will assume that $\boldsymbol{\Sigma}_n$ is non-singular for all $n$.
For $0<\alpha < 1$, let 
\begin{equation}
\eta_n(\alpha)= \min_{\mathcal{A}\subset \lbrace1,...,n\rbrace, \#\mathcal{A}%
=[n\alpha]} \min_{\Vert \theta \Vert=1} \max_{i\in\mathcal{A}} \vert \mathbf{%
x}_{i}^{T} \theta \vert.
\nonumber
\end{equation}
For $\mathbf{z}\in\mathbb{R}^{p}$ and $c>0$, let $
I(\mathbf{z},c) = \left\lbrace i = 1, \dots, n : \left\vert \mathbf{x}_{i}^{T} \mathbf{z} \right \vert \leq c \right\rbrace$,
let $\mathcal{B}(\delta)$ be the ball in $\mathbb{R}^{p}$ centered at zero with radius $\delta$ and let $\mathcal{S}^{*}$ be the sphere centered at zero with radius 1. 

We will need the following
assumptions:

\begin{itemize}
\item[R0.] $\rho_0$ is a bounded $\rho$-function and, for some $m>0$, $\rho_0(u)=1$ if $\vert u \vert \geq m$.

\item[R1.] $\rho_1$ is a continuously differentiable, bounded $\rho$-function. Let $\psi_1$ be the derivative of $\rho_1$. Then $\psi_1(t)$ and $t\psi_1(t)$ are bounded.

\item[R2.] $\rho_1$ is a three times continuously differentiable, bounded $\rho$-function. Let $\psi_1$ be the derivative of $\rho_1$. Then $\psi_1(t), \psi_{1}^{\prime}(t), \psi_{1}^{\prime\prime}(t),t\psi_1(t), t\psi_{1}^{\prime}(t)$ and  $t\psi_{1}^{\prime\prime}(t)$ are bounded. Also, $\mathbb{E}\psi_{1}^{\prime}\left( u/s_0 \right)>0$, where $s_0$ is the limit in probability of $s_n$.

\item[F0.] $F_{0}$ has a density, $f_0$, that is even, a
monotone decreasing function of $|u|$ and a strictly decreasing function of $%
|u|$ in a neighbourhood of 0.

\item[X0.] $p<[n(1-b)]$ for all $n$, where $b$ is the constant used in \eqref{Eq:defMscale}.

\item[X1.] \begin{enumerate} \item[a)] There exists a constant $M>0$ such that $ (1/n) \sum\limits_{i=1}^{n}  \Vert \mathbf{x}_{i} \Vert^2 \leq p M$ for all $n$. 
\item[b)] There exists a constant $B>0$ such that $\max_{i \leq n} \Vert \mathbf{x}_{i} \Vert \leq B n$ for all $n$.
\end{enumerate}

\item[X2.] $\tau = \sup_{n} \rho_{2,n} <\infty$.

\item[X3.] For some $0<\alpha<1$, $\liminf \eta_{n}(\alpha)>0$.

\item[X4.] For any $c>0$ there are constants $a>0$, $\delta>0$ and $C>0$ such that for all $\bbet \in \mathcal{B}(\delta)$, $\mathbf{z} \in \mathcal{S}^{*}$, and $n$, $
\sum_{i \in J} \left( \mathbf{x}_{i}^{T} \mathbf{z} \right)^{2} \geq a n$,
where $J= I(\bbet,c)\cap I(\mathbf{z},C)$.

\item[X5.] For any $c>0$ and $\varepsilon>0$ there are constants $\delta^{'}>0$ and $C>0$ such that for all $\bbet \in \mathcal{B}(\delta^{'})$, $\mathbf{z} \in \mathcal{S}^{*}$, and $n$, $
\sum_{i \notin J} \left( \mathbf{x}_{i}^{T} \mathbf{z} \right)^{2} \leq \varepsilon n,$
where $J= I(\bbet,c)\cap I(\mathbf{z},C)$.

\item[X6.]   $\max_{i\leq n} \Vert \mathbf{x}_{i} \Vert^{2} = o(n /p^{2})$.

\end{itemize}

Conditions [R0] and [R1] are satisfied by, for example, Tukey's Bisquare loss function. Condition [R2] is a strengthening of condition [R1]. It is satisfied by, for example, the exponential squared loss $\rho(x)=1-\exp(-x^2)$ and $\rho(x)=1-\left( 1- x^{2} \right)^{4} I\left\lbrace \vert x \vert \leq 1 \right\rbrace$, which is similar to Tukey's Bisquare loss. 

Note that condition [F0] does not require finite moments from $F_0$. Thus, extremely heavy tailed error distributions, such as Cauchy's distribution, can be easily seen to satisfy it.

Condition [X0] is needed in the proof of the consistency of the scale estimate provided by the S-estimator.
To prove the consistency of the regression estimators we will need $p/n \rightarrow 0$.
To obtain the rate of consistency of the estimators we will need $(p \log n)/n \rightarrow 0$.  Note that $(p \log n)/n \rightarrow 0$ is no stronger than $(p \log p)/n \rightarrow 0$, paraphrasing \citep{Portnoy}: if $p\leq \sqrt{n}$, $(p \log n)/n \leq (\log n) / \sqrt{n}\rightarrow 0$; while if $p\geq \sqrt{n}$, $(p \log n)/n \leq (2 p \log p)/n$.  

[X1] a) holds when the covariates are standardized. [X1] b) appears in \citep{Portnoy} and holds, for example, if all the covariates are bounded and $p/n^2 \rightarrow 0$. On the other hand, suppose that $\mathbf{x}_{i}$, $i=1,\dots,n$ are independent and identically distributed random vectors in $\mathbb{R}^{p}$ such that for some $C$, $\mathbb{E}x_{i,j}^{2}\leq C$ for all $i,j$ and $n$. Then, [X1] holds in probability if $p/n \rightarrow 0$; see Section 4 of \citep{Portnoy}. [X2] appears in, for example, \citep{Portnoy85} and \citep{Welsh}. See also \citep{BaiI}. 

The function $\eta_n(\alpha)$ that appears in [X3] was introduced in \citep{Davies}. It measures in some sense the worst possible
conditioning of any subset of size $[n\alpha]$ of the carriers.
Suppose that $\mathbf{x}_{i}$, $i=1,\dots,n$ are independent and identically distributed random vectors in $\mathbb{R}^{p}$ such that there exists $\eta_1, \eta_2$ with $0<\eta_1, \eta_2<1$ such that, for all $n$, $
\sup_{\Vert \boldsymbol{\theta} \Vert =1} \mathbb{P} \left( \vert \mathbf{x}^{T} \boldsymbol{\theta} \vert < \eta_1 \right) < 1 - \eta_2$. We will show that in this case, [X3] holds in probability. 
Note that  $
\sup_{\Vert \boldsymbol{\theta} \Vert =1} \mathbb{P} \left( \vert \mathbf{x}^{T} \boldsymbol{\theta} \vert < \eta_1 \right) < 1 - \eta_2$ holds, for some $0<\eta_1, \eta_2<1$ and all $n$, for example, if $\mathbf{x}_{i} \sim N_{p}(\mathbf{0},\mathbf{M}_n)$ and there exists some $\kappa>0$ such that the smallest eigenvalue of $\mathbf{M}_n$ is bounded below by $\kappa$ for all $n$. It is easy to show, using maximal inequalities such as those of Theorem \ref{Theo-Pollard} in the Appendix, that if $p/n \rightarrow 0$,
\begin{align*}
\sup_{\Vert \boldsymbol{\theta} \Vert =1} \left\vert \frac{1}{n}\sum\limits_{i=1}^{n} I\left\lbrace \vert \mathbf{x}_{i}^{T} \boldsymbol{\theta} \vert< \eta_1\right\rbrace -   \mathbb{P} \left( \vert \mathbf{x}^{T} \boldsymbol{\theta} \vert < \eta_1 \right)  \right\vert \cp 0.
\end{align*}
Hence, with arbitrarily high probability, for large enough $n$, 
\begin{align*}
\sup_{\Vert \boldsymbol{\theta} \Vert =1} \frac{1}{n}\sum\limits_{i=1}^{n} I\left\lbrace \vert \mathbf{x}_{i}^{T} \boldsymbol{\theta} \vert< \eta_1\right\rbrace < \sup_{\Vert \boldsymbol{\theta} \Vert =1} \mathbb{P} \left( \vert \mathbf{x}^{T} \boldsymbol{\theta} \vert < \eta_1 \right) + \eta_2/2 < 1- \eta_2/2.
\end{align*}
In this case, for any $\alpha$ such that $1-\eta_2/2<\alpha<1$, for large enough $n$ it follows that for all $\boldsymbol{\theta} $ with $\Vert \boldsymbol{\theta} \Vert =1$ and all subsets $\mathcal{A}$ of $\lbrace 1,\dots,n \rbrace$ with $\#\mathcal{A}=[n\alpha]$ there exists $i\in\mathcal{A}$ such that $\vert \mathbf{x}_{i}^{T}\boldsymbol{\theta} \vert \geq \eta_1$, which implies $\eta_{n}(\alpha)\geq \eta_1$. 

For $\mathcal{A}\subset \lbrace1,...,n\rbrace$ with $\#\mathcal{A}%
=[n\alpha]$ let $
\Sigma(\mathcal{A})= (1/[n\alpha]) \sum_{i\in \mathcal{A}} \mathbf{x}_{i}\mathbf{x}_{i}^{T}$.
Let $\rho_{1,n}(\mathcal{A})$  be the smallest eigenvalue of $\Sigma(\mathcal{A})$.
Take $\boldsymbol{\theta}$ with $\Vert \boldsymbol{\theta} \Vert=1$. Then $
\boldsymbol{\theta}^{T}\Sigma(\mathcal{A})\boldsymbol{\theta}\leq \max_{i\in\mathcal{A}} \vert \mathbf{x}_{i}^{T} \boldsymbol{\theta}\vert^{2}$.
Hence $
\rho_{1,n}(\mathcal{A}) \leq \min_{\Vert \boldsymbol{\theta}\Vert=1} \max_{i \in \mathcal{A}}\vert \mathbf{x}_{i}^{T} \boldsymbol{\theta}\vert^{2}$,
which implies that
\begin{align*}
\min_{\mathcal{A}\subset \lbrace1,...,n\rbrace, \#\mathcal{A}%
=[n\alpha]} \rho_{1,n}(\mathcal{A}) \leq \eta_n(\alpha)^{2}.
\end{align*}
It follows that $\liminf \eta_{n} (\alpha)>0$ holds if the smallest eigenvalues of the covariance matrices formed from any subsample of size $[n \alpha]$ are uniformly bounded away from zero. 
See also Examples 1, 2 and 3 of \citep{Davies}.
The following lemma, the proof of which can be found in the Appendix, gives necesary conditions for $\liminf \eta_{n} (\alpha)>0$ to hold.
\begin{lemma}
\label{lemma:cond_davies}
Assume [X1] a) holds. Then, if $\liminf \eta_{n} (\alpha)>0$ for some $0<\alpha<1$, there exists positive numbers $\eta_1,\eta_2$ and $n_0$ such that
\begin{align*}
\frac{1}{n}\sum_{i=1}^{n}\mathbf{x}_i \mathbf{x}_{i}^{T} I\lbrace \Vert \mathbf{x}_i \Vert < \eta_1 \sqrt{p} \rbrace - \eta_2 \mathbf{I}_p
\end{align*}
is positive definite for all $n\geq n_0$.
\end{lemma}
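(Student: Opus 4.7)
The plan is to translate the hypothesis $\liminf_n \eta_n(\alpha) > 0$ into a uniform quadratic lower bound $\boldsymbol{\theta}^T M_n \boldsymbol{\theta} \geq \eta_2$ over $\boldsymbol{\theta} \in \mathcal{S}^{*}$, where $M_n = (1/n)\sum_i \mathbf{x}_i \mathbf{x}_i^T I\{\Vert \mathbf{x}_i \Vert < \eta_1 \sqrt{p}\}$. The argument combines a contrapositive reading of the definition of $\eta_n(\alpha)$ with a Markov-type truncation supplied by [X1] a).

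First I would fix $\eta>0$ and $n_1$ with $\eta_n(\alpha)\geq \eta$ for all $n\geq n_1$. Unfolding the min--min--max, this says that for every unit $\boldsymbol{\theta}$ and every $\mathcal{A}\subset\{1,\dots,n\}$ with $\#\mathcal{A}=[n\alpha]$ some $i\in\mathcal{A}$ satisfies $\vert\mathbf{x}_i^T\boldsymbol{\theta}\vert\geq\eta$. Contrapositively, the set $\{i:\vert\mathbf{x}_i^T\boldsymbol{\theta}\vert<\eta\}$ has cardinality strictly less than $[n\alpha]$; otherwise a size-$[n\alpha]$ subset of it would furnish a bad $\mathcal{A}$. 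Writing $\beta=1-\alpha>0$, it follows that for every unit $\boldsymbol{\theta}$ and all sufficiently large $n$,
\begin{equation*}
\#\{i:\vert\mathbf{x}_i^T\boldsymbol{\theta}\vert\geq\eta\} \;>\; n-[n\alpha] \;\geq\; \tfrac{n\beta}{2}.
\end{equation*}

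Next I invoke [X1] a) via Markov's inequality: since $\sum_i\Vert\mathbf{x}_i\Vert^2\leq npM$,
\begin{equation*}
\#\{i:\Vert\mathbf{x}_i\Vert\geq\eta_1\sqrt{p}\} \;\leq\; \frac{\sum_i \Vert\mathbf{x}_i\Vert^2}{\eta_1^2 p} \;\leq\; \frac{nM}{\eta_1^2}.
\end{equation*}
Choosing $\eta_1 = \sqrt{4M/\beta}$ makes this at most $n\beta/4$, with $\eta_1$ depending only on $M$ and $\alpha$. Combining, the "good" set $G(\boldsymbol{\theta})=\{i:\vert\mathbf{x}_i^T\boldsymbol{\theta}\vert\geq\eta,\ \Vert\mathbf{x}_i\Vert<\eta_1\sqrt{p}\}$ has cardinality at least $n\beta/2-n\beta/4=n\beta/4$, so
\begin{equation*}
\boldsymbol{\theta}^T M_n \boldsymbol{\theta} \;\geq\; \frac{1}{n}\sum_{i\in G(\boldsymbol{\theta})} (\mathbf{x}_i^T\boldsymbol{\theta})^2 \;\geq\; \frac{1}{n}\cdot\frac{n\beta}{4}\cdot\eta^2 \;=\; \frac{\beta\eta^2}{4}.
\end{equation*}
Setting $\eta_2=\beta\eta^2/8$ and $n_0$ any threshold beyond $n_1$ at which $n-[n\alpha]\geq n\beta/2$, one obtains $\boldsymbol{\theta}^T(M_n-\eta_2\mathbf{I}_p)\boldsymbol{\theta}>0$ uniformly over the unit sphere, which is exactly the asserted positive definiteness.

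I do not anticipate a genuine obstacle: the lemma is essentially a counting argument, and the only care required is to order the choice of constants without circularity ($\eta$ supplied by the hypothesis, then $\eta_1$ chosen from $M$ and $\alpha$, then $\eta_2$ from $\eta$ and $\alpha$). The mildly non-trivial step is recognizing that the definition of $\eta_n(\alpha)$ is equivalent to a lower bound on the number of large projections $\vert\mathbf{x}_i^T\boldsymbol{\theta}\vert$, which is what couples the problem to the truncation by $\Vert\mathbf{x}_i\Vert$.
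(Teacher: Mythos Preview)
Your proof is correct and follows essentially the same approach as the paper: use the Markov bound from [X1]\,a) to control the number of large-norm $\mathbf{x}_i$, use the definition of $\eta_n(\alpha)$ to guarantee many indices with $|\mathbf{x}_i^T\boldsymbol{\theta}|\geq\eta$, and combine to lower-bound the quadratic form of the truncated Gram matrix. The paper arranges the counting in the opposite order---first restricting to the small-norm set $\mathcal{A}$ and then selecting the $[n\alpha]$ smallest projections inside it---but the underlying argument and the resulting constants are the same up to harmless factors.
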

Note that if [X1] and [X3] hold, by Lemma \ref{lemma:cond_davies} we have that $\inf_{n} \rho_{1,n}>0$.

[X4] and [X5] were introduced in \citep{Portnoy} where they appear as X1 and X2. \citep{Portnoy} showed that these conditions hold in probability if the covariates are sampled from an appropriate distribution in $\mathbb{R}^{p}$, such as a scale mixture of standard multivariate normals, and $(p \log n)/n \rightarrow 0$.
[X4] and [X5] are used in Lemma \ref{lemma-mod-portnoy}, a result that is needed in the proof of the rate of convergence of the estimators. The aforementioned lemma shows that, very loosely speaking, $L_n(\bbet)$ is convex in a neighbourhood of the true regression parameter with probability tending to one.

[X6] is needed in the proof of the asymptotic normality of the estimators. It holds, for example, if the covariates are bounded and $p^{3}/n \rightarrow 0 $. This is the rate of growth of $p$ allowed by the asymptotic normality result of \citep{Huber73}.

\section{Results}
\label{Section-Res}

In this section, we state and prove all our main results.
First, we prove the consistency of $s_{n}^{S}(\mathbf{r}(\hat{\boldsymbol{\beta}}_{S}))$. The proof of the following lemma can be found in the Appendix.
\begin{lemma}
\label{lemma-conv-sca} Assume [R0], [F0] and [X0] hold and that $p/n \rightarrow 0$. Assume also that $f_0$ is strictly decreasing on the non negative real numbers. Then, $s_{n}^{S}(\mathbf{r}(\hat{\boldsymbol{\beta}}_{S})) \overset{P}{%
\rightarrow} s(F_0)$, where $s(F_0)$ is the positive solution of $\mathbb{E} \rho_0(u/s)=b$.
\end{lemma}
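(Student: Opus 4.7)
The proof plan is to sandwich $s^{*}_n := s_{n}^{S}(\mathbf{r}(\hbbet_{S}))$ between $s(F_0) \pm o_P(1)$. By the scale and regression equivariance of the S-estimator I would first reduce to the case $\bbet_{0} = \mathbf{0}$, so that $r_{i}(\bbet) = u_i - \mathbf{x}_{i}^{T}\bbet$. Writing $T_n(\bbet, s) = (1/n) \sum_{i=1}^{n} \rho_0(r_i(\bbet)/s)$, note that under [X0] the M-scale is almost surely positive and characterized by the equation $T_n(\bbet, s_n^{S}(\mathbf{r}(\bbet))) = b$, and that $T_n(\bbet, \cdot)$ is non-increasing in $s$. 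The argument then splits into an upper and a lower bound.

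For the upper bound, $s^{*}_n \leq s_{n}^{S}(\mathbf{u})$ holds by the definition of the S-estimator as a minimizer. Since the $u_i$ are i.i.d.\ from $F_0$, a standard uniform law of large numbers for the monotone one-parameter family $\{\rho_0(\cdot/s) : s > 0\}$, combined with the strict monotonicity of $s \mapsto \mathbb{E}\rho_0(u/s)$ at $s(F_0)$ (guaranteed by $f_0$ being strictly decreasing on $[0,\infty)$), yields $s_{n}^{S}(\mathbf{u}) \cp s(F_0)$.

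The lower bound is the heart of the argument. Fix $\varepsilon > 0$ and let $s = s(F_0) - \varepsilon$. The event $\{s^{*}_n \leq s\}$ implies $T_n(\hbbet_{S}, s) \leq b$ by monotonicity in $s$, so it suffices to show that, with probability tending to one, $T_n(\bbet, s) > b$ for every $\bbet \in \mathbb{R}^{p}$. The key deterministic input is a symmetric-rearrangement inequality: since $1 - \rho_0(\cdot/s)$ is even, non-negative and unimodal and $f_0$ is even and unimodal, an Anderson-type inequality yields $\mathbb{E}\rho_0((u - a)/s) \geq \mathbb{E}\rho_0(u/s)$ for every $a \in \mathbb{R}$. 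Applied term by term, this gives $\mathbb{E}\,T_n(\bbet, s) \geq \mathbb{E}\rho_0(u/s) > b$ uniformly in $\bbet$, the strictness coming from $s < s(F_0)$. To transfer this from mean to empirical average uniformly in $\bbet$ I would invoke a maximal inequality of the type referenced as Theorem \ref{Theo-Pollard} in the appendix, applied to the VC-subgraph class $\{(u,\mathbf{x}) \mapsto \rho_0((u - \mathbf{x}^{T}\bbet)/s) : \bbet \in \mathbb{R}^{p}\}$, which has VC-dimension of order $p$ and envelope one.

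The main obstacle I anticipate is this last empirical-process step: controlling $\sup_{\bbet \in \mathbb{R}^{p}} |T_n(\bbet, s) - \mathbb{E}\,T_n(\bbet, s)|$ on the entire unbounded parameter space under only the assumption $p/n \to 0$ (rather than the more common $(p \log n)/n \to 0$). This relies on the uniform covering number for the class obtained by composing the bounded monotone $\rho_0$ with a linear functional in $\bbet$ being of order $(1/\varepsilon)^{O(p)}$; coupling this with exponential concentration yields uniform convergence exactly when $p/n \to 0$, which is the hypothesized rate.
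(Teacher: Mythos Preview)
Your proposal is correct and follows essentially the same approach as the paper, whose proof is a one-line reference to Theorem~3 of Davies with the instruction to replace Davies' uniform law of large numbers by the sharper Lemma~\ref{lemma2-unif-davies} (requiring only $p/n\to 0$). Your sandwich argument reconstructs the structure of that proof and correctly isolates this empirical-process step as the crux; one cosmetic slip is that $\rho_0$ is even rather than monotone, so the VC bound goes through the decomposition into monotone pieces exactly as in the proof of Lemma~\ref{lemma2-unif-davies}.
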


It is worth noting that in Theorem 3 of \citep{Davies}, the author proves the consistency of regression S-estimators and the corresponding scale estimates assuming $(p\log n)/n \rightarrow 0$ ($(k_n\log n)/n \rightarrow 0$ in his notation). By using a sharper maximal inequality, this condition can be weakened to $p/n \rightarrow 0$. To do so, simply replace any appeals in the proof of Theorem 3 of \citep{Davies} to the author's Lemma 2 by appeals to our Lemma \ref{lemma2-unif-davies}.

The following lemma is similar to Lemma 1 of \citep{Davies}. 
For $v, s \in\mathbb{R}$, let $
R(v,s)=\mathbb{E} \rho_1 \left( (u-v)/s \right)$.
\begin{lemma}
\label{lemma1-davies}
Assume [R1] and [F0] hold. Then
\begin{enumerate}
\item[(i)] $R: \mathbb{R}\times\mathbb{R}_{+}\rightarrow [0,1]$ is continuous.
\item[(ii)] $R(0,s)\leq R(v,s)$ for $v\in \mathbb{R}$, $s>0$.
\item[(iii)] $R(0,s) < \inf_{\vert v \vert \geq \eta} R(v,s)$ for all $\eta>0$ and $s>0$.
\end{enumerate}
\end{lemma}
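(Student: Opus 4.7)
The plan is to handle the three items in turn, each reducing to a standard property of $\rho_1$ or $f_0$.

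For (i), since $\rho_1$ is bounded and continuous by [R1], for any sequence $(v_n, s_n) \to (v, s)$ with $s > 0$ the integrand $\rho_1((u - v_n)/s_n)$ converges pointwise to $\rho_1((u - v)/s)$ and is dominated by the constant $1 \in L^{1}(F_{0})$. The dominated convergence theorem then yields continuity of $R$ at $(v, s)$.

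For (ii), I would exploit evenness of $\rho_1$ to rewrite $R(v, s) = \mathbb{E}[\tilde{\rho}_1(\vert U - v \vert / s)]$, where $U \sim F_0$ and $\tilde{\rho}_1$ is the restriction of $\rho_1$ to $[0, \infty)$ (nondecreasing by [R1]). A layer-cake decomposition reduces the inequality $R(0, s) \leq R(v, s)$ to the stochastic dominance $\mathbb{P}(\vert U \vert \leq x) \geq \mathbb{P}(\vert U - v \vert \leq x)$ for every $x \geq 0$, equivalently $\int_{-x}^{x} f_0 \geq \int_{v - x}^{v + x} f_0$. This is the one-dimensional form of Anderson's inequality for symmetric unimodal densities; it is verified by pairing points in the symmetric differences $[-x, x] \setminus [v - x, v + x]$ and $[v - x, v + x] \setminus [-x, x]$ (via $u \mapsto v - u$ when the two intervals overlap, and by disjoint comparison otherwise) and noting that in each pair the point inside $[-x, x]$ is at least as close to the origin, so $f_0$ is at least as large there.

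For (iii), I would assemble three ingredients. \emph{Pointwise strict inequality:} for every fixed $v \neq 0$ and $s > 0$, $R(v, s) > R(0, s)$. The interval comparison in (ii) becomes strict for sufficiently small $x > 0$ because $f_0$ is strictly decreasing in a neighborhood of the origin by [F0]; the $\rho$-function hypothesis, which forces $\tilde{\rho}_1$ to be strictly increasing wherever it takes values below its supremum $1$, then turns a strict gap in the layer-cake integrand into a strict gap in $R$. \emph{Behavior at infinity:} for fixed $s > 0$, $\rho_1((u - v)/s) \to 1$ pointwise as $\vert v \vert \to \infty$, so by dominated convergence $R(v, s) \to 1$, whereas $R(0, s) < 1$ since $\rho_1(0) = 0$ and $f_0$ has positive density near the origin. \emph{Compactness:} by (i), $R(\cdot, s)$ is continuous; choosing $V$ large enough that $R(v, s) > (R(0, s) + 1)/2$ for $\vert v \vert \geq V$, the continuous function $R(\cdot, s)$ attains its infimum on the compact annulus $\{\eta \leq \vert v \vert \leq V\}$, and by the pointwise strict inequality this infimum strictly exceeds $R(0, s)$.

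I expect the main obstacle to be the strict inequality in the first ingredient of (iii): one must isolate a positive-measure set of $x$-values on which the Anderson-type interval comparison is strict (using [F0]) and which also lies in the region where $\tilde{\rho}_1$ is strictly increasing (so the strict gap is not erased by $\tilde{\rho}_1$ being flat on its plateau $\{\tilde{\rho}_1 = 1\}$). This is exactly what the definition of a $\rho$-function is designed to permit, but it calls for a careful splitting of the integration range.
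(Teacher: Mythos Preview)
Your proposal is correct and follows essentially the same route as the paper. The only organizational differences are that the paper folds the \emph{strict} inequality $R(v,s)>R(0,s)$ for $v\neq 0$ into the proof of (ii) (stating that $R(\cdot,s)$ has a \emph{unique} minimum at $0$, via the same stochastic-dominance comparison of $|u-v|$ you describe), and that for (iii) the paper argues by contradiction with a minimizing sequence---ruling out $|v_n|\to\infty$ because $R(v_n,s)\to 1>R(0,s)$, then extracting a convergent subsequence and invoking the unique minimum---whereas you truncate to a compact annulus directly; the ingredients (behavior at infinity, continuity, strict pointwise inequality) are identical.
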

\begin{proof}

(i) follows from the fact that $\rho_1$ is bounded and the Bounded Convergence Theorem. 

Next we prove (ii). This is roughly Lemma 3.1 of \citep{MM85}. Note that for any $v\neq 0$, the distribution function $R_v$ of $\vert u - v \vert$ satisfies: $R_v(t) \leq R_{0}(t)$ for all $t>0$ and there exists $\delta>0$ such that $R_v(t)<R_{0}(t)$ for $0 < t \leq \delta$. Since $\rho_{1}(u/s)$ is non decreasing in $\vert u \vert$ and strictly increasing in a neighbourhood of $0$, it follows that for all $s$, $R(v,s)$ has a unique minimum at $v=0$.

Now we prove (iii). Suppose for some $\eta, s>0$, $R(0,s) \geq \inf_{\vert v \vert \geq \eta} R(v,s)$. Note that by [R1] and [F0], $R(0,s)<1$. Take $v_n$ with $\vert v_n\vert \geq \eta$ such that $R(v_n,s)\rightarrow \inf_{\vert v \vert \geq \eta} R(v,s)$. Note that if for some subsequence $v_{n_k}$, $\vert v_{n_k}\vert \rightarrow \infty$, then by the Bounded Convergence Theorem $R(v_{n_k},s)\rightarrow 1$ and hence $R(0,s)\geq 1$, leading to a contradiction. Hence $v_n$ must be bounded. We can assume, eventually passing to a subsequence, that $v_n \rightarrow v^{*}$, with $\vert v^{*} \vert \geq \eta$. Hence $R(v^{*},s) =\inf_{\vert v \vert \geq \eta} R(v,s) \leq R(0,s) $. But by (ii), $R(v,s)$ has a unique minimum at $v=0$. Hence (iii) follows.
\end{proof}

\begin{theorem}[Consistency]
\label{Teo-cons} Assume [R1] and [F0] hold and that $p/n \rightarrow 0$. Then, for any $0<\alpha<1$,
$\eta_n(\alpha) \Vert \hat{\boldsymbol{\beta}} - \boldsymbol{\beta}_0
\Vert \overset{P}{\rightarrow} 0.$
\end{theorem}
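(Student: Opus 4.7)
The plan is to show, for every $\eta > 0$, that $\mathbb{P}(\eta_n(\alpha)\Vert \hbbet - \bbet_0\Vert \geq \eta) \to 0$ by comparing $L_n(\hbbet)$ with $L_n(\bbet_0)$. Since $\hbbet$ minimizes $L_n$, it suffices to establish that, with probability tending to one, $n^{-1} L_n(\bbet) > n^{-1} L_n(\bbet_0)$ uniformly over the bad set $\mathcal{T}_n(\eta) = \{\bbet : \eta_n(\alpha)\Vert \bbet - \bbet_0\Vert \geq \eta\}$. The right-hand side converges in probability to $R(0,s_0)$ by the ordinary law of large numbers, using $s_n \cp s_0$, continuity and boundedness of $\rho_1$, and the Bounded Convergence Theorem.

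For the pointwise lower bound on the left-hand side, set $\mu(\eta) = \inf_{\vert v \vert \geq \eta} R(v,s_0)$, which by Lemma \ref{lemma1-davies}(iii) satisfies $\mu(\eta) > R(0,s_0)$, and put $\Delta = (1-\alpha)(\mu(\eta) - R(0,s_0))>0$. For $\bbet \in \mathcal{T}_n(\eta)$, writing $v_i(\bbet) = \mathbf{x}_i^T(\bbet - \bbet_0)$ and $\theta = (\bbet - \bbet_0)/\Vert \bbet - \bbet_0\Vert$, the definition of $\eta_n(\alpha)$ forces at least $(1-\alpha)n$ of the $\vert v_i(\bbet)\vert$ to exceed $\Vert \bbet - \bbet_0\Vert \eta_n(\alpha) \geq \eta$. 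Combined with the nonnegativity statement in Lemma \ref{lemma1-davies}(ii), this gives $n^{-1}\sum_i R(v_i(\bbet), s_0) \geq R(0,s_0) + \Delta$.

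The main obstacle is passing from this pointwise bound on the conditional expectation to a uniform in-sample lower bound on $n^{-1} L_n(\bbet)$. I would work on the event $\{\vert s_n - s_0\vert < \delta\}$, where $R(v, s_n)$ is close to $R(v, s_0)$ uniformly in $v$ by boundedness of $\rho_1$, and then establish
\begin{equation*}
\sup_\bbet \left\vert \frac{1}{n}\sum_{i=1}^n \left[\rho_1\!\left(\frac{u_i - v_i(\bbet)}{s_n}\right) - R(v_i(\bbet), s_n)\right]\right\vert \cp 0
\end{equation*}
by applying a Pollard-type maximal inequality (Theorem \ref{Theo-Pollard}) to the VC-subgraph class $\{(u,\mathbf{x}) \mapsto \rho_1((u - \mathbf{x}^T \gamma)/s) : \gamma \in \mathbb{R}^p, \vert s - s_0\vert < \delta\}$. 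Since $\rho_1$ has uniformly bounded envelope and this class has VC index of order $p$, the sharper form of the maximal inequality embodied in Lemma \ref{lemma2-unif-davies} delivers $o_P(1)$ under $p/n \to 0$, which is precisely the rate improvement noted in the remark after Lemma \ref{lemma-conv-sca} (a cruder bound would force $(p\log n)/n \to 0$). The unboundedness of $\mathcal{T}_n(\eta)$ in $\bbet$ is handled by a separate direct argument: choosing an auxiliary $\alpha' \in (0,1)$ small enough that $1-\alpha' > R(0,s_0) + \Delta$, for $\Vert \bbet - \bbet_0\Vert$ sufficiently large at least $(1-\alpha')n$ residuals $v_i(\bbet)$ blow up enough that $\rho_1((u_i - v_i(\bbet))/s_n)$ saturates near $1$, forcing $n^{-1}L_n(\bbet) \geq R(0,s_0) + \Delta$ deterministically on a high-probability event. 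Combining the pointwise bound, the uniform fluctuation bound on the truncated part of $\mathcal{T}_n(\eta)$, and the large-$\bbet$ bound yields $\inf_{\bbet \in \mathcal{T}_n(\eta)} n^{-1} L_n(\bbet) \geq R(0, s_0) + \Delta/2$ with probability tending to one, which together with $n^{-1}L_n(\bbet_0) \cp R(0,s_0)$ and $L_n(\hbbet) \leq L_n(\bbet_0)$ forces $\hbbet \notin \mathcal{T}_n(\eta)$ with probability tending to one.
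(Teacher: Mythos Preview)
Your approach is essentially the same as the paper's: both exploit the inequality $L_n(\hbbet)\leq L_n(\bbet_0)$, the uniform law of large numbers from Lemma~\ref{lemma2-unif-davies}, and the strict gap in Lemma~\ref{lemma1-davies}(iii). The paper works directly at the random point $\hbbet$, showing that the count $N=\#\{i:|\mathbf{x}_i^{T}(\hbbet-\bbet_0)|\geq\varepsilon\}$ must satisfy $N<(1-\alpha)n$ with high probability, whence a subset of size $[n\alpha]$ has small projections and $\eta_n(\alpha)\Vert\hbbet-\bbet_0\Vert<\varepsilon$; this is exactly the contrapositive of your uniform lower bound over $\mathcal{T}_n(\eta)$.

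One remark: your separate ``large-$\bbet$'' argument is superfluous. Lemma~\ref{lemma2-unif-davies} already delivers the fluctuation bound uniformly over \emph{all} $\mathbf{b}\in\mathbb{R}^{p}$ (the VC argument there uses only the boundedness of $\rho_1$, not compactness of the parameter set), and your deterministic inequality $n^{-1}\sum_i R(v_i(\bbet),s_0)\geq R(0,s_0)+\Delta$ holds for every $\bbet\in\mathcal{T}_n(\eta)$, bounded or not. Combining the two, together with $\sup_v|R(v,s_n)-R(v,s_0)|\cp 0$, yields the uniform in-sample lower bound on $n^{-1}L_n(\bbet)$ over the whole of $\mathcal{T}_n(\eta)$ without any truncation. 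This is fortunate, because the auxiliary argument as you sketched it would appear to require $\eta_n(\alpha')$ bounded away from zero for some small $\alpha'$, which is not part of the hypotheses (and since $\eta_n$ is non-decreasing in $\alpha$, knowing $\eta_n(\alpha)>0$ gives no control at $\alpha'<\alpha$).
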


\begin{proof}

Fix $0<\alpha <1$. Note that by definition of $\hat{\boldsymbol{\beta }}$%
\begin{equation}
\frac{1}{n}\sum\limits_{i=1}^{n}\rho _{1}\left( \frac{u_{i}-\mathbf{x}%
_{i}^{T}(\hat{\boldsymbol{\beta }}-\boldsymbol{\beta }_{0})}{s_{n}}%
\right) \leq \frac{1}{n}\sum\limits_{i=1}^{n}\rho _{1}\left( \frac{u_{i}}{%
s_{n}}\right).  \notag
\end{equation}

By Lemma \ref{lemma2-unif-davies} in the Appendix, we have that
\begin{equation}
\sup_{\mathbf{b}\in \mathbb{R}^{p},0<s<2s_0}\frac{1}{n}%
\left\vert \sum\limits_{i=1}^{n} \left(\rho _{1}\left( \frac{u_{i}-\mathbf{x}_{i}^{T}\mathbf{b%
}}{s}\right) -R(\mathbf{x}_{i}^{T}\mathbf{b},s) \right)\right\vert \overset{P}{\rightarrow }0.
\label{lemma2-davies}
\end{equation}
Since by assumption $s_n\cp s_0$, Lemma \ref{lemma1-davies} (i) implies that the right
hand side of the last inequality converges in probability to 
\begin{equation}
b^{*} = \mathbb{E} \rho_1 \left( \frac{u}{s_0} \right).  \label{cota_sup}
\end{equation}

By Lemma \ref{lemma1-davies} (ii), $R(0,s)\leq R(v,s)$ for all $v\in \mathbb{R%
}$, $s\in \mathbb{R}$. Then%
\begin{equation}
R(0,s_{n})\leq \frac{1}{n}\sum\limits_{i=1}^{n}R(\mathbf{x}_{i}^{T}(\hat{%
\boldsymbol{\beta }}-\boldsymbol{\beta }_{0}),s_{n}).  \label{minR}
\end{equation}

By Lemma \ref{lemma1-davies} (i)
\begin{equation}
R(0,s_n) \overset{P}{\rightarrow} b^{*}.  \label{cota_inf}
\end{equation}

Then, it follows from \eqref{lemma2-davies}, \eqref{cota_sup}, \eqref{minR} and \eqref{cota_inf} that%
\begin{equation}
\frac{1}{n}\sum\limits_{i=1}^{n}\rho _{1}\left( \frac{u_{i}-\mathbf{x}%
_{i}^{T}(\hat{\boldsymbol{\beta }}-\boldsymbol{\beta }_{0})}{s_{n}}%
\right) \overset{P}{\rightarrow }b^{\ast }  \notag
\end{equation}
and%
\begin{equation}
\frac{1}{n}\sum\limits_{i=1}^{n}R(\mathbf{x}_{i}^{T}(\hat{\boldsymbol{\beta }%
}-\boldsymbol{\beta }_{0}),s_{n})\overset{P}{\rightarrow }b^{\ast }.
\label{conv-R}
\end{equation}

By \eqref{conv-R}, given $\delta >0$, with arbitrarily high probability, for
large enough $n$ we have that%
\begin{equation}
\frac{1}{n}\sum\limits_{i=1}^{n}R(\mathbf{x}_{i}^{T}(\hat{\boldsymbol{\beta }%
}-\boldsymbol{\beta }_{0}),s_{n})\leq b^{\ast }+\delta .
\label{b*+delta}
\end{equation}

Let $\varepsilon>0$, we will show that with arbitrarily high probability, for
large enough $n$, $\eta _{n}(\alpha )\Vert \hat{\boldsymbol{\beta }}-%
\boldsymbol{\beta }_{0}\Vert \leq \varepsilon $. Let $A=\{i:|\mathbf{x}_{i}^{T}(%
\hat{\boldsymbol{\beta }}-\boldsymbol{\beta }_{0})|\geq \varepsilon \}$ and $%
N=\#A$. Then%
\begin{align*}
\frac{1}{n}\sum\limits_{i=1}^{n}R(\mathbf{x}_{i}^{T}(\hat{\boldsymbol{\beta }%
}-\boldsymbol{\beta }_{0}),s_{n})& =\frac{1}{n}\sum\limits_{i\in A}R(%
\mathbf{x}_{i}^{T}(\hat{\boldsymbol{\beta }}-\boldsymbol{\beta }%
_{0}),s_{n})  +\frac{1}{n}\sum\limits_{i\in A^{c}}R(\mathbf{x}_{i}^{T}(\hat{\boldsymbol{%
\beta }}-\boldsymbol{\beta }_{0}),s_{n}).
\end{align*}
Note that%
\begin{equation}
\frac{1}{n}\sum\limits_{i\in A^{c}}R(\mathbf{x}_{i}^{T}(\hat{\boldsymbol{%
\beta }}-\boldsymbol{\beta }_{0}),s_{n})\geq \frac{n-N}{n}R(0,s_{n}). 
\label{cota-Ac}
\end{equation}
Also, if $|\mathbf{x}_{i}^{T}(\hat{%
\boldsymbol{\beta }}-\boldsymbol{\beta }_{0})|\geq \varepsilon $ then $R(\mathbf{x}_{i}^{T}(\hat{\boldsymbol{\beta }}-\boldsymbol{\beta }%
_{0}),s_{n})\geq \inf_{\vert v \vert \geq \varepsilon} R(v,s_n)$. Hence 
\begin{equation}
R(\mathbf{x}_{i}^{T}(\hat{\boldsymbol{\beta }}-\boldsymbol{\beta }_{0}),s_{n}) \geq R(0,s_{n}) +  \left(\inf_{\vert v \vert \geq \varepsilon} R(v,s_n) -R(0,s_{n}) \right).
\nonumber
\end{equation}
We will show that with arbitrarily high probability, for large enough $n$ and $i\in A$
\begin{equation}
R(\mathbf{x}_{i}^{T}(\hat{\boldsymbol{\beta }}-\boldsymbol{\beta }%
_{0}),s_{n})\geq R(0,s_{n})+\kappa,
\label{Eq:conv-inf}
\end{equation}
for some $\kappa =\kappa (\varepsilon )>0$.

First, we will show that
\begin{align}
\sup_{v} \vert R(v,s_n) - R(v,s_0)\vert \cp 0
\label{Eq:Conv-supR}
\end{align}
Fix $u, v \in \mathbb{R}$. Let $\phi_1(t) = \psi_1(t)t$. By [R1], $\phi_1$ is bounded. Applying the Mean Value Theorem we get that, for some $s_{n}^{*}$ such that $\vert s_{n}^{*} - s_0 \vert \leq \vert s_n - s_0\vert$
\begin{align}
\left\vert \rho_{1} \left( \frac{u-v}{s_n}\right) - \rho_{1} \left( \frac{u-v}{s_0}\right) \right\vert & \leq \left\vert \psi_{1} \left( \frac{u-v}{s_{n}^{*}}\right)\left( \frac{u-v}{s_{n}^{*}}\right) \right\vert \left\vert \frac{s_n - s_0}{s_{n}^{*}}\right\vert
\nonumber
\\ & \leq \Vert \phi_1 \Vert_{\infty}  \left\vert \frac{s_n - s_0}{s_{n}^{*}}\right\vert.
\label{Eq:Cota-dif-rho}
\end{align}
Fix some $\eta>0$. Since $s_n \cp s_0$, with arbitrarily high probability, for large enough $n$, the right hand side of \eqref{Eq:Cota-dif-rho} is smaller than $\eta$ for all $u, v$. \eqref{Eq:Conv-supR} is proven.

By Lemma \ref{lemma1-davies} (iii), $\inf_{\vert v \vert \geq \varepsilon} R(v,s_0) > R(0,s_0)$. Let $\eta_1 = ( \inf_{\vert v \vert \geq \varepsilon} R(v,s_0) - R(0,s_0))/4$.
Fix $\eta_2 >0$. Take $n_0$ such that for all $n\geq n_0$, $\sup_{v} \vert R(v,s_n) - R(v,s_0)\vert<\eta_{1}/2$ with probability greater than $1-\eta_2$. For each $n_1 \geq n_0$, take $v_{n_1}$ with $\vert v_{n_1} \vert \geq \varepsilon$ such that $
\inf_{\vert v \vert \geq \varepsilon} R(v,s_{n_1}) \geq R(v_{n_1},s_{n_1}) - \eta_{1}/2$.
Note that $v_{n_1}$ is random.
It follows that with probability greater than $1-\eta_2$, for all $n_1 \geq n_0$
\begin{align*}
\inf_{\vert v \vert \geq \varepsilon} R(v,s_0) - \inf_{\vert v \vert \geq \varepsilon} R(v,s_{n_1})
 &\leq R(v_{n_1},s_0) - R(v_{n_1},s_{n_1}) + \eta_{1}/2
\\ &\leq \sup_{v} \vert R(v,s_{n_1}) - R(v,s_0)\vert +\eta_{1}/2 < \eta_1.
\end{align*}
Since $R(0,s_n) \cp R(0,s_0)$, with arbitrarily high probability, for large enough $n$
\begin{align*}
\inf_{\vert v \vert \geq \varepsilon} R(v,s_n) -R(0,s_{n}) 
\\= \inf_{\vert v \vert \geq \varepsilon} R(v,s_n) - \inf_{\vert v \vert \geq \varepsilon} R(v,s_0) + \inf_{\vert v \vert \geq \varepsilon} R(v,s_0) - R(0,s_{0}) + R(0,s_0) -R(0,s_{n})
\\ \geq 2\eta_1. 
\end{align*}
We have proven \eqref{Eq:conv-inf} for $\kappa(\varepsilon)= \left( \inf_{\vert v \vert \geq \varepsilon} R(v,s_0) - R(0,s_0) \right)/2$.
Hence with arbitrarily high probability, for large enough $n$
\begin{equation}
\frac{1}{n}\sum\limits_{i\in A}R(\mathbf{x}_{i}^{T}(\hat{\boldsymbol{\beta }}%
-\boldsymbol{\beta }_{0}),s_{n})\geq \frac{N}{n}(R(0,s_{n})+\kappa )
\nonumber
\end{equation}%
and thus by \eqref{b*+delta}, \eqref{cota-Ac} and \eqref{Eq:conv-inf} with arbitrarily high probability, for large $n$, we have that if $N\geq (1-\alpha)n$ then $R(0,s_{n})\leq b^{\ast }+\delta -(1-\alpha )\kappa$.
In summary, we have shown that
\begin{equation}
\left\lbrace N\geq (1-\alpha )n \right\rbrace \subseteq \left\lbrace R(0,s_{n})\leq b^{\ast }+\delta -(1-\alpha
)\kappa \right\rbrace \cup A_{n},  \label{contencion-N}
\end{equation}
where $\mathbb{P}(A_{n})\rightarrow 0.$ For any given $\varepsilon $, we can find a
sufficiently small $\delta $ such that $\delta -(1-\alpha )\kappa <0$. Then
by \eqref{cota_inf} and \eqref{contencion-N}, $\mathbb{P}(N\geq (1-\alpha )n)\rightarrow 0$. Hence,
with arbitrarily high probability, for sufficiently large $n$, $n \alpha< n-N$. In this case, there must exist $\mathcal{A}\subset \{1,...,n\}$ with $\#\mathcal{A}=[n\alpha]$ such that $\left\vert \mathbf{x}_{i}^{T}(\hat{\boldsymbol{\beta }}-%
\boldsymbol{\beta }_{0}) \right\vert < \varepsilon$ for all $i \in \mathcal{A}$ and this implies that
\begin{align*}
\eta _{n}(\alpha )\Vert \hat{\boldsymbol{\beta }}-\boldsymbol{\beta }%
_{0}\Vert \leq \min_{\mathcal{A}\subset \{1,...,n\},\#\mathcal{A}=[n\alpha
]}\max_{i\in \mathcal{A}} \left\vert \mathbf{x}_{i}^{T}(\hat{\boldsymbol{\beta }}-%
\boldsymbol{\beta }_{0}) \right\vert \leq \varepsilon,
\end{align*}
which is what we wanted to prove.
\end{proof}

Note that Theorem \ref{Teo-cons} together with [X3] entails that $\hbbet$ is consistent. In the following theorem, we derive its rate of convergence.
Let
\begin{align*}
H_{i}^{n}(\mathbf{x}_{i}^{T} \bbet) = \inf\left\lbrace \psi_{1}^{'}\left( \frac{u_i-v}{s_n} \right) : \vert v \vert \leq \vert \mathbf{x}_{i}^{T} \bbet \vert \right\rbrace.
\end{align*}

\begin{theorem}[Rate of convergence]
\label{Theo-rate}
Assume [R2], [F0] and [X1]-[X5] hold. Assume $(p \log n)/n \rightarrow 0$. Then 
 $\Vert \hat{\bbet} - \bbet_0 \Vert = O_{P}(\sqrt{p/n})$
\end{theorem}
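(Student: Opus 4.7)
I would prove this via a second-order Taylor expansion of $L_n$ at $\bbet_0$, combining a local quadratic lower bound obtained from Lemma \ref{lemma-mod-portnoy} (via the function $H_{i}^{n}$) with a moment bound on the score $\sum_i \psi_1(u_i/s_n)\mathbf{x}_i$. Set $\mathbf{d} = \hbbet - \bbet_0$. Theorem \ref{Teo-cons} together with [X3] gives $\mathbf{d} \cp \mathbf{0}$, so with probability tending to one I may work on the event $\mathbf{d} \in \mathcal{B}(\delta)$ for any prescribed $\delta > 0$; I would pick $\delta$ small enough that Lemma \ref{lemma-mod-portnoy} applies on $\mathcal{B}(\delta)$. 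Since $\rho_1 \in C^{2}$ by [R2] and $L_n(\hbbet) \leq L_n(\bbet_0)$, Taylor's theorem with Lagrange remainder yields, for some $\xi_i \in [0,1]$,
\begin{align*}
0 \geq -\frac{1}{s_n}\sum_{i=1}^{n} \psi_1\!\left(\frac{u_i}{s_n}\right)\mathbf{x}_i^{T}\mathbf{d} + \frac{1}{2 s_n^{2}} \sum_{i=1}^{n} \psi_1'\!\left(\frac{u_i - \xi_i \mathbf{x}_i^{T}\mathbf{d}}{s_n}\right) (\mathbf{x}_i^{T}\mathbf{d})^{2}.
\end{align*}

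By the definition of $H_{i}^{n}$, the summand in the quadratic term is at least $H_{i}^{n}(\mathbf{x}_i^{T}\mathbf{d})(\mathbf{x}_i^{T}\mathbf{d})^{2}$, so Lemma \ref{lemma-mod-portnoy} provides constants $\delta, c_{0} > 0$ such that, with probability tending to one, $(1/n)\sum_i H_{i}^{n}(\mathbf{x}_i^{T}\mathbf{d})(\mathbf{x}_i^{T}\mathbf{d})^{2} \geq c_{0}\Vert \mathbf{d}\Vert^{2}$ uniformly over $\mathbf{d} \in \mathcal{B}(\delta)$. Since $s_n \cp s_0 > 0$, this gives a lower bound of the form $c n \Vert \mathbf{d}\Vert^{2}$ for the quadratic term, for some $c > 0$. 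Writing $g_n = \sum_i \psi_1(u_i/s_n)\mathbf{x}_i$, Cauchy--Schwarz bounds the linear term above by $\Vert g_n\Vert \Vert \mathbf{d}\Vert$, so once I establish $\Vert g_n\Vert = O_P(\sqrt{np})$ the Taylor inequality becomes $c n \Vert \mathbf{d}\Vert^{2} \leq s_n^{-1} O_P(\sqrt{np}) \Vert \mathbf{d}\Vert$, yielding the claimed rate $\Vert \mathbf{d}\Vert = O_P(\sqrt{p/n})$.

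The remaining, and hardest, step is thus bounding $\Vert g_n\Vert$. I would decompose $g_n = g_n^{(0)} + (g_n - g_n^{(0)})$ with $g_n^{(0)} = \sum_i \psi_1(u_i/s_0)\mathbf{x}_i$. Oddness of $\psi_1$ and the symmetry of $F_0$ from [F0] give $\mathbb{E}\psi_1(u_i/s_0) = 0$, so independence, boundedness of $\psi_1$, and [X1] a) yield $\mathbb{E}\Vert g_n^{(0)}\Vert^{2} = \sum_i \mathbb{E}\psi_1(u_i/s_0)^{2} \Vert \mathbf{x}_i\Vert^{2} = O(np)$, hence $\Vert g_n^{(0)}\Vert = O_P(\sqrt{np})$. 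For the remainder, boundedness of $t\psi_1'(t)$ from [R2] implies that $s \mapsto \psi_1(u/s)$ is Lipschitz in $s$ uniformly in $u$ on any compact neighborhood of $s_0$. Since $s_n \cp s_0$, with probability tending to one the remainder is dominated by the supremum of $\Vert \sum_i [\psi_1(u_i/s) - \psi_1(u_i/s_0)]\mathbf{x}_i\Vert$ as $s$ ranges over a fixed small deterministic neighborhood of $s_0$. A chaining step in $s$, combined with a Bernstein-type tail bound (whose variance input is $O(|s-s_0|^{2}n)$ thanks to [X2]) and a covering of the unit sphere of $\mathbb{R}^{p}$ (whose log cardinality is linear in $p$), controls this supremum at order $O_P(\sqrt{np})$; the rate $(p\log n)/n \to 0$ is needed precisely to balance the chaining and covering factors at this order. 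This uniform-in-$s$ empirical process bound is the technical heart of the argument and effectively replaces the global convexity argument that would otherwise yield the rate for a monotone M-estimator.
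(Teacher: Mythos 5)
Your skeleton coincides with the paper's proof: the same one-step Taylor expansion of $L_n$ at $\bbet_0$ exploiting $L_n(\hbbet)\leq L_n(\bbet_0)$, the same quadratic lower bound via Lemma \ref{lemma-mod-portnoy} after localizing with Theorem \ref{Teo-cons} and [X3], and the same reduction of the problem to showing the score is $O_P(\sqrt{np})$ uniformly in the scale. The gap is in what you yourself call the technical heart. Your plan for controlling $\sup_{s}\Vert \sum_{i}\left(\psi_1(u_i/s)-\psi_1(u_i/s_0)\right)\mathbf{x}_i\Vert$ by a net on the unit sphere of $\mathbb{R}^{p}$ (cardinality $e^{cp}$) combined with a Bernstein-type bound does not go through under the paper's assumptions. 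Bernstein needs, besides the variance proxy $O(|s-s_0|^{2}n)$ you cite from [X2], a sup-norm bound on the summands, and here that is $C\vert s-s_0\vert \max_{i}\vert \mathbf{x}_{i}^{T}\mathbf{b}\vert$, which [X1] b) only controls by $O(n)$; even granting a favorable truncation via [X1] a) (which bounds $\Vert\mathbf{x}_i\Vert$ by $K\sqrt{p}$ for all but a fraction $M/K^{2}$ of the indices), at deviation level $t\asymp \sqrt{np}\,\vert s-s_0\vert$ the inequality is in its Poisson regime with exponent of order $t/M\asymp\sqrt{n}$, and the union bound over $e^{cp}$ net points then requires $\sqrt{n}\gtrsim p$. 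The hypothesis $(p\log n)/n\rightarrow 0$ allows $p$ up to nearly $n/\log n$, so the regime $\sqrt{n}\ll p\ll n/\log n$ defeats your chaining as described. Relatedly, your remark that the rate condition is needed ``precisely to balance the chaining and covering factors'' misidentifies where it enters: in the paper, $(p\log n)/n\rightarrow 0$ is consumed by Lemma \ref{lemma-mod-portnoy} (through Portnoy's Lemma 3.1 and conditions [X4]--[X5]), while the score bound holds with no rate condition at all.

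The paper closes exactly this hole by never covering the sphere. Lemma \ref{lemma-orden1} bounds $\mathbb{E}\sup_{s\in[a,b]}\Vert n^{-1/2}\sum_{i}\psi_1(u_i/s)\mathbf{x}_i\Vert$ by $A\sqrt{p}$ over a whole fixed interval $[a,b]$ containing $s_0$ (so no decomposition at $s_0$ and no rate for $s_n$ is needed, only $s_n\cp s_0$): it works coordinatewise, writing the squared norm as $\sum_{j=1}^{p}$ of coordinate suprema, and controls each coordinate in $L^{2}$ via the maximal inequality of Theorem \ref{Theo-Pollard} applied to the one-parameter class $\lbrace \psi_1(u/s)x : s\in[a,b]\rbrace$, whose bracketing numbers are $O(1/\varepsilon)$ by the very Lipschitz-in-$s$ property you identified from the boundedness of $t\psi_{1}^{\prime}(t)$. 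This route needs only second moments, i.e.\ [X1] a), never an exponential tail, and hence survives any $p=O(n)$. Your fixed-scale second-moment computation for $g_n^{(0)}$ is correct (it is essentially the envelope computation inside that lemma), but the Bernstein-plus-net step must be replaced by this coordinatewise $L^{2}$ chaining; as written, it fails.
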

\begin{proof}
A first order Taylor expansion shows that, for some $0\leq \zeta_i \leq 1$,
\begin{align*}
&0\geq \frac{1}{n}(L_{n}(\hbbet)-L_{n}(\boldsymbol{\beta}_{0})) = -\frac{1}{ns_{n}}%
\sum\limits_{i=1}^{n}\psi_{1}\left( \frac{u_{i}}{s_{n}}\right) \mathbf{x}_{i}^{T}(\hbbet-\boldsymbol{\beta}_{0})
\\
&+  \frac{1}{2}\frac{1}{s_{n}^{2}}  \frac{1}{n}\sum\limits_{i=1}^{n}\psi_{1}^{\prime}
\left( \frac {u_{i}-\zeta_{i}(\hbbet-\boldsymbol{\beta}_{0})^{\text{T}}%
\mathbf{x}_{i}}{s_{n}}\right) (\mathbf{x}_{i}^{\text{T}} (\hbbet-\boldsymbol{\beta}_{0}))^{2} 
 \\
&= A_n + B_n.
\end{align*}
Since by assumption $s_n \cp s_0$, by Lemma \ref{lemma-orden1} in the Appendix we have that
\begin{equation}
 A_n = \Vert \hbbet-\boldsymbol{\beta}_{0} \Vert O_{P}\left( \sqrt{\frac{p}{n}} \right).
\label{Cota-An}
\end{equation}
Let $\delta$ and $a^{*}$ be as in Lemma \ref{lemma-mod-portnoy} in the Appendix. Since $\Vert \hbbet-\boldsymbol{\beta}_{0} \Vert \cp 0 $, for sufficiently large $n$, with arbitrarily high probability we have that $\Vert \hbbet-\boldsymbol{\beta}_{0} \Vert <\delta $ and by Lemma \ref{lemma-mod-portnoy}
\begin{align}
B_n &\geq \frac{1}{2\: s_{n}^{2}} \frac{1}{n} \sum\limits_{i=1}^{n} (\mathbf{x}_{i}^{T}(\hbbet-\boldsymbol{\beta}_{0}))^{2} \inf \left\lbrace \psi'_{1} \left( \frac{u_i-v}{s_n}\right) : \vert v \vert \leq \vert \mathbf{x}_{i}^{T}(\hbbet-\boldsymbol{\beta}_{0}) \vert \right\rbrace \nonumber \\ &= \frac{1}{2\: s_{n}^{2}} \frac{1}{n} \sum\limits_{i=1}^{n} (\mathbf{x}_{i}^{T}(\hbbet-\boldsymbol{\beta}_{0}))^{2} H_{i}^{n}(\mathbf{x}_{i}^{T}(\hbbet-\boldsymbol{\beta}_{0})) \nonumber	\\ &\geq \frac{1}{2\: s_{n}^{2}}\Vert \hbbet-\boldsymbol{\beta}_{0} \Vert^{2} \inf_{\Vert \mathbf{z} \Vert =1 ,\Vert  \boldsymbol{\beta} \Vert \leq \delta }\frac{1}{n} \sum\limits_{i=1}^{n} (\mathbf{x}_{i}^{T} \mathbf{z})^{2} H_{i}^{n} (\mathbf{x}_{i}^{T}\boldsymbol{\beta}) \nonumber \\ &\geq \frac{a^{*}}{2\: s_{n}^{2}}\Vert \hbbet-\boldsymbol{\beta}_{0} \Vert^{2}\label{Cota-Bn}.
\end{align}

Hence, it follows from \eqref{Cota-An} and \eqref{Cota-Bn}  that with arbitrarily high probability, for large enough $n$ and some positive constant $M_1$, $
0 \geq A_n+B_n  \geq -M_1\sqrt{p/n} \Vert \hbbet-\boldsymbol{\beta}_{0}) \Vert  + r_n  \Vert 
\hbbet - \boldsymbol{\beta}_0\Vert^{2}$,
where $r_n\cp r_0>0$. Then, $
  \Vert \hbbet - \boldsymbol{\beta}_0\Vert \leq (M_1/r_n) \sqrt{p/n}$,
which proves the theorem.
\end{proof}

Note that under the assumptions of Theorem \ref{Theo-rate}, if we further assume that $\max_{i \leq n} \Vert \mathbf{x}_i \Vert^2 = o(n/p)$ it follows that $\max_{i\leq n} \vert \mathbf{x}_{i}^{T}\left( \hbbet - \bbet_0 \right)\vert \cp 0$. Hence, we can apply Theorem 2 of \citep{Mammen88} to obtain asymptotic expansions for S-estimators.

Next, we derive the asymptotic distribution of $\hat{\bbet}$ .

\begin{theorem}
\label{Theo-Orac}
Assume [R2], [F0] and [X1]-[X6] hold. Assume $(p \log n)/ n \rightarrow 0 $.  Let $\mathbf{a}_n$ be a vector in $\mathbb{R}^{p}$ satisfying $\Vert \mathbf{a}_n \Vert = 1$. Let $r_{n}^{2}= \mathbf{a}_n^{T} \boldsymbol{\Sigma}_{n}^{-1} \mathbf{a}_n$. Then
\begin{equation}
\sqrt{n} r_{n}^{-1} \mathbf{a}_{n}^{T} \left( \hbbet - \bbet_{0} \right) \cw N\left(0,s_0^{2} \frac{a(\psi_1)}{b(\psi_1)^{2} } \right),
\nonumber
\end{equation}
where $a(\psi_1) = \mathbb{E} \psi_{1}^{2}\left(u/s_0 \right)$ and $b(\psi_1) = \mathbb{E} \psi_{1}^{\prime}\left(u/s_0 \right)$.
\end{theorem}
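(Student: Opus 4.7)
The plan is to linearize the first-order condition $\sum_i \psi_1(r_i(\hbbet)/s_n)\mathbf{x}_i = \mathbf{0}$ via a second-order Taylor expansion and then apply the Lindeberg--Feller CLT to the resulting linear contrast. Since $\rho_1\in C^3$ by [R2], expansion in $\mathbf{x}_i^T(\hbbet-\bbet_0)/s_n$ yields
\begin{equation*}
\frac{1}{\sqrt{n}}\sum_{i=1}^n \psi_1(u_i/s_n)\mathbf{x}_i \;=\; \frac{1}{s_n}\frac{H_n}{n}\sqrt{n}(\hbbet-\bbet_0) + \frac{1}{\sqrt{n}}R_n,
\end{equation*}
with $H_n = \sum_i \psi_1'(u_i/s_n)\mathbf{x}_i\mathbf{x}_i^T$ and $R_n$ a quadratic remainder weighted by $\psi_1''$ at intermediate points. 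The target Bahadur representation is
\begin{equation*}
\sqrt{n}\,r_n^{-1}\mathbf{a}_n^T(\hbbet-\bbet_0) \;=\; \frac{s_n}{b(\psi_1)}\,r_n^{-1}\mathbf{a}_n^T\boldsymbol{\Sigma}_n^{-1}\,\frac{1}{\sqrt{n}}\sum_{i=1}^n \psi_1(u_i/s_0)\mathbf{x}_i + o_P(1),
\end{equation*}
after which Slutsky with $s_n/b(\psi_1)\cp s_0/b(\psi_1)$ and the CLT on the right-hand side yield the claimed $N(0,s_0^2 a(\psi_1)/b(\psi_1)^2)$ limit.

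The remainder $R_n$ is controlled using $\Vert\psi_1''\Vert_\infty<\infty$ from [R2], the rate $\Vert\hbbet-\bbet_0\Vert = O_P(\sqrt{p/n})$ from Theorem \ref{Theo-rate}, and $\max_i\Vert\mathbf{x}_i\Vert^2 = o(n/p^2)$ from [X6]: combining $\sum_i(\mathbf{x}_i^T\mathbf{b})^2\leq n\rho_{2,n}\Vert\mathbf{b}\Vert^2$ with $\vert\mathbf{a}_n^T\boldsymbol{\Sigma}_n^{-1}\mathbf{x}_i\vert \leq r_n\rho_{1,n}^{-1/2}\Vert\mathbf{x}_i\Vert$ gives
\begin{equation*}
\left\vert r_n^{-1}\mathbf{a}_n^T\boldsymbol{\Sigma}_n^{-1}R_n/\sqrt{n}\right\vert \;\lesssim\; \sqrt{n}\,\rho_{1,n}^{-1/2}\rho_{2,n}\max_i\Vert\mathbf{x}_i\Vert\,\Vert\hbbet-\bbet_0\Vert^2 \;=\; o_P(1),
\end{equation*}
where [X2] and Lemma \ref{lemma:cond_davies} keep $\rho_{2,n}$ bounded and $\rho_{1,n}$ bounded away from zero. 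For the Hessian I would need $\Vert H_n/n - b(\psi_1)\boldsymbol{\Sigma}_n\Vert_{\mathrm{op}} = o_P(1/\sqrt{p})$, since $\Vert\boldsymbol{\Sigma}_n^{-1}(1/\sqrt{n})\sum_i\psi_1(u_i/s_0)\mathbf{x}_i\Vert = O_P(\sqrt{p})$ by a trace/second-moment bound. The inner substitution $s_n\to s_0$ in $\psi_1'$ is handled by boundedness of $t\psi_1''(t)$ from [R2] together with [X2] and the rate of $s_n - s_0$, while the i.i.d.\ mean-zero matrix sum $(1/n)\sum_i(\psi_1'(u_i/s_0)-b(\psi_1))\mathbf{x}_i\mathbf{x}_i^T$ is treated by a matrix concentration argument akin to the appendix's maximal inequalities.

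The substitution $\psi_1(\cdot/s_n)\to \psi_1(\cdot/s_0)$ in the score uses the mean-value bound $\vert\psi_1(u/s)-\psi_1(u/s_0)\vert\leq \Vert t\psi_1'\Vert_\infty\vert s-s_0\vert/s_0$ from [R2]; since $u\psi_1'(u/s_0)$ is odd in $u$ and $F_0$ is symmetric by [F0], the correction sum $(1/\sqrt{n})\sum_i \psi_1'(u_i/s_0)u_i\mathbf{x}_i$ has zero mean and covariance proportional to $\boldsymbol{\Sigma}_n$, so its projected contrast $r_n^{-1}\mathbf{a}_n^T\boldsymbol{\Sigma}_n^{-1}(\cdot)$ is $O_P(1)$, negligible after multiplication by $\vert s_n-s_0\vert=o_P(1)$. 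The CLT on $Z_{n,i} = r_n^{-1}\mathbf{a}_n^T\boldsymbol{\Sigma}_n^{-1}\psi_1(u_i/s_0)\mathbf{x}_i/\sqrt{n}$ is then routine: $\mathbb{E} Z_{n,i}=0$, $\sum_i\mathrm{Var}(Z_{n,i}) = a(\psi_1)$, and the Lindeberg condition follows from $\max_i\vert Z_{n,i}\vert\leq \Vert\psi_1\Vert_\infty\rho_{1,n}^{-1/2}\max_i\Vert\mathbf{x}_i\Vert/\sqrt{n}=o(1)$ by [X6]. I expect the main obstacle to be the $o_P(1/\sqrt{p})$ operator-norm Hessian-concentration step: this rate is tight in the increasing-dimension regime and precisely requires $(p\log n)/n\to 0$, entering through the maximal inequalities of the appendix together with [X6] in order to absorb the $\sqrt{p}$ factor coming from the inverse-covariance-weighted score.
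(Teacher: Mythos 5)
Your overall architecture coincides with the paper's: expand the first-order condition around $\bbet_0$, control a quadratic remainder via $\Vert\psi_1''\Vert_\infty$, [X6] and the $O_P(\sqrt{p/n})$ rate from Theorem \ref{Theo-rate} (your remainder bound is exactly equivalent to the paper's bound on $\mathbf{W}_n-\mathbf{W}_n^1$, i.e.\ its term $A_n$), target an $o_P(1/\sqrt{p})$ operator-norm Hessian approximation, and finish with Lindeberg--Feller and Slutsky. The genuine gap is your repeated appeal to ``the rate of $s_n-s_0$'': no such rate exists in the hypotheses --- the theorem assumes only $s_n \cp s_0$, and Lemma \ref{lemma-conv-sca} likewise delivers only consistency of the S-scale. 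In the Hessian step, centering at $b(\psi_1)\boldsymbol{\Sigma}_n$ (evaluated at $s_0$) forces you to control $\left(\mathbb{E}\psi_1'(u/s_n)-b(\psi_1)\right)\boldsymbol{\Sigma}_n$ in operator norm at level $o_P(1/\sqrt{p})$, which needs $\vert s_n-s_0\vert = o_P(1/\sqrt{p})$. This occurrence is repairable: since the offending term is a scalar multiple of $\boldsymbol{\Sigma}_n$, after hitting it with $\mathbf{a}_n^T\boldsymbol{\Sigma}_n^{-1}$ it becomes $\left(\mathbb{E}\psi_1'(u/s_n)-b(\psi_1)\right)\sqrt{n}\,\mathbf{a}_n^T(\hbbet-\bbet_0)$ and can be absorbed into the left-hand side; the paper sidesteps it by centering at $\mathbf{W}_n^2=\mathbb{E}\psi_1'(u/s_n)\boldsymbol{\Sigma}_n$ (expectation over $u$ only, at the random $s_n$), controlling $\mathbf{W}_n^1-\mathbf{W}_n^2$ uniformly in $s$ via the Frobenius-norm maximal inequality of Lemma \ref{lemma-matriz}, and keeping $s_n/\mathbb{E}\psi_1'(u/s_n)$ as a Slutsky prefactor, which requires no rate. (Incidentally, $(p\log n)/n\rightarrow 0$ enters through Theorem \ref{Theo-rate}, not through this concentration step, which is absorbed by [X6] alone.)

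The second occurrence is not fixable by rearrangement: the score substitution $\psi_1(u_i/s_n)\rightarrow\psi_1(u_i/s_0)$. Your termwise mean-value bound yields $O_P(\vert s_n-s_0\vert)\cdot\frac{1}{\sqrt{n}}\sum_i\vert\mathbf{a}_n^T\boldsymbol{\Sigma}_n^{-1}\mathbf{x}_i\vert$, and by Cauchy--Schwarz that sum can be as large as $\sqrt{n}\,r_n$, so $o_P(1)\cdot O(\sqrt{n})$ is not negligible. Your refinement --- freezing the intermediate point at $s_0$ so that $u\psi_1'(u/s_0)$ is odd and the correction sum has mean zero --- is illegitimate: the mean-value point lies between $s_0$ and $s_n$, hence depends on $s_n$ and thereby on the entire sample, destroying both the independence across $i$ and the zero-mean property you invoke. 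Pushing the Taylor expansion in $s$ one order further to isolate the $s_0$-evaluated (genuinely mean-zero) term leaves a second-order remainder of size roughly $\sqrt{n}\,(s_n-s_0)^2 r_n$, which again demands an unavailable rate $\vert s_n-s_0\vert=o_P(n^{-1/4})$. The paper closes this step without any rate via an equicontinuity argument: Lemma \ref{lemma-tight} a) shows the process $G_n(t)=\frac{1}{\sqrt{n}}\sum_i\psi_1\left(u_i/(0.5s_0+ts_0)\right)\mathbf{a}_n^T\boldsymbol{\Sigma}_n^{-1}\mathbf{x}_i$ is tight in $C[0,1]$ (Billingsley's moment criterion, using boundedness of $t\psi_1'(t)$, [X2], [X3] via Lemma \ref{lemma:cond_davies}, and [X6]), so consistency of $s_n$ alone suffices. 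You would need to import this tightness lemma, or an equivalent uniform-in-$s$ empirical-process bound; with it, the remainder of your argument is sound and essentially the paper's proof.
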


\begin{proof}

From the definition of $\hbbet$, \eqref{Eq:M2}, it follows that
\begin{align*}
\mathbf{0}_{p} &=\frac{1}{\sqrt{n}}\frac{-1}{s_n}\sum\limits_{i=1}^{n}\psi_{1}\left( \frac{y_{i}-\mathbf{x}_{i}^{T}\hat{\bbet}}{s_{n}}\right)\mathbf{x}_{i}.
\end{align*}
Then the Mean Value Theorem gives
\begin{align*}
\mathbf{0}_{p}=\frac{1}{\sqrt{n}}\frac{-1}{s_n}\sum\limits_{i=1}^{n}\psi_{1}\left( \frac{u_i}{s_{n}}\right)\mathbf{x}_{i}&+
\frac{1}{s_n^{2}}\mathbf{W}_{n}\sqrt{n}(\hat{\bbet}-\bbet_{0}),
\end{align*}
where 
\begin{equation}
\mathbf{W}_{n} = \frac{1}{n}\sum\limits_{i=1}^{n}\psi_{1}'\left( \frac{u_{i}-\zeta_i \mathbf{x}_{i}^{T} (\hbbet - \bbet_{0}) }{s_{n}}\right)\mathbf{x}_{i}\mathbf{x}_{i}^{T}
\nonumber
\end{equation}
and $0\leq \zeta_i \leq 1$. Let 
\begin{align*}
\mathbf{W}_{n}^{1} &= \frac{1}{n}\sum\limits_{i=1}^{n}\psi_{1}'\left( \frac{u_i }{s_{n}}\right)\mathbf{x}_{i}\mathbf{x}_{i}^{T}, \:\:
\mathbf{W}_{n}^{2} = \mathbb{E} \psi_{1}^{\prime} \left( \frac{u}{s_n}\right) \boldsymbol{\Sigma}_{n},
\nonumber
\end{align*}
where the expectation in $\mathbb{E} \psi_{1}^{\prime} \left( u/s_n\right)$ is taken only with respect to $u$.
Then
\begin{align*}
\sqrt{n} \mathbf{a}_{n}^{T}(\hat{\bbet}-\bbet) &= \frac{s_{n}}{\mathbb{E} \psi_{1}^{\prime} \left( u/s_n\right)} \frac{1}{\sqrt{n}} \sum\limits_{i=1}^{n}\psi_{1}\left( \frac{u_i}{s_{n}}\right)\mathbf{a}_{n}^{T} \boldsymbol{\Sigma}_{n}^{-1} \mathbf{x}_{i}
\\&- \frac{1}{\mathbb{E} \psi_{1}^{\prime} \left( u/s_n\right)}\mathbf{a}_{n}^{T}\boldsymbol{\Sigma}_{n}^{-1}(\mathbf{W}_{n}- \mathbf{W}_{n}^{1})\sqrt{n}(\hat{\bbet}-\bbet_{0})
\\&- \frac{1}{\mathbb{E} \psi_{1}^{\prime} \left( u/s_n\right)}\mathbf{a}_{n}^{T}\boldsymbol{\Sigma}_{n}^{-1}(\mathbf{W}_{n}^{1}- \mathbf{W}_{n}^{2})\sqrt{n}(\hat{\bbet}-\bbet_{0})
\\&= \frac{s_{n}}{\mathbb{E} \psi_{1}^{\prime} \left( u/s_n\right)} \frac{1}{\sqrt{n}} \sum\limits_{i=1}^{n}\psi_{1}\left( \frac{u_i}{s_{n}}\right)\mathbf{a}_{n}^{T} \boldsymbol{\Sigma}_{n}^{-1} \mathbf{x}_{i}  
\\&+ A_n +B_n.
\end{align*}
We will show that $A_n +B_n= o_{P}(1)$. Note that by [R2] and the Bounded Convergence Theorem, $\mathbb{E} \psi_{1}^{\prime} \left( u/s_n\right) \cp \mathbb{E} \psi_{1}^{\prime} \left( u/s_0\right)$.

For a matrix $\mathbf{W}$ let $\Vert \mathbf{W} \Vert$ be its spectral norm and let $\Vert \mathbf{W} \Vert_{F}$ be its Frobenius norm. Recall that for any $\mathbf{W}$, $\Vert\mathbf{W}  \Vert \leq \Vert \mathbf{W}\Vert_{F}$. We will show that $\Vert \mathbf{W}_n - \mathbf{W}_{n}^{1} \Vert = o_{P}(1/ \sqrt{p})$ and $\Vert \mathbf{W}_{n}^{1} - \mathbf{W}_{n}^{2} \Vert=o_{P}(1/ \sqrt{p})$. Take $\mathbf{b}\in\mathbb{R}^{p}$ with $\Vert \mathbf{b} \Vert=1$. Then, applying the Mean Value Theorem, we get
\begin{align*}
&\vert \mathbf{b}^{T} (\mathbf{W}_{n} - \mathbf{W}_{n}^{1}) \mathbf{b} \vert \\ &\leq  \frac{1}{n}\sum\limits_{i=1}^{n} \left\vert \psi_{1}'\left( \frac{u_i }{s_{n}}\right)- \psi_{1}'\left( \frac{u_{i}-\zeta_i \mathbf{x}_{i}^{T} (\hbbet - \bbet_{0}) }{s_{n}}\right) \right \vert (\mathbf{b}^{T}\mathbf{x}_{i})^{2}  \\ &\leq \frac{1}{n} \sum\limits_{i=1}^{n} \frac{\Vert \psi_{1}^{\prime\prime}\Vert_{\infty}}{s_{n}} \vert \mathbf{x}_{i}^{T}(\hbbet - \bbet_{0}) \vert (\mathbf{b}^{T}\mathbf{x}_{i})^{2} \leq \frac{\Vert \psi_{1}^{\prime\prime}\Vert_{\infty}}{s_{n}} \max_{i\leq n} \Vert \mathbf{x}_{i} \Vert \Vert \hbbet - \bbet_{0}\Vert \tau.
\end{align*}
Since $\Vert \hbbet - \bbet_{0}\Vert=O_{P}(\sqrt{p/n})$, taking supremum over $\mathbf{b}$, from [X6] it follows that
\begin{align*}
\Vert \mathbf{W}_n - \mathbf{W}_{n}^{1}\Vert = o_{P}\left(\frac{1}{\sqrt{p}}\right)
\end{align*}
and hence we have that $A_n = o_{P}(1)$.
By Lemma \ref{lemma-matriz} in the Appendix and [X6],
\begin{align*}
\Vert \mathbf{W}_{n}^{1} - \mathbf{W}_{n}^{2} \Vert_{F} = O_{P}\left( \sqrt{p/n} \max_{i \leq n} \Vert \mathbf{x}_{i}\Vert \right) = o_{P}\left(\frac{1}{\sqrt{p}}\right)
\end{align*}
and hence we have that $B_n = o_{P}(1)$.

We have thus shown that $A_n+B_n=o_{P}(1)$ and so it follows that
\begin{align*}
\sqrt{n} \mathbf{a}_{n}^{T}(\hat{\bbet}-\bbet_{0}) &= \frac{s_{n}}{\mathbb{E} \psi_{1}^{\prime} \left( u/s_n\right)} \frac{1}{\sqrt{n}} \sum\limits_{i=1}^{n}\psi_{1}\left( \frac{u_i}{s_{n}}\right)\mathbf{a}_{n}^{T} \boldsymbol{\Sigma}_{n}^{-1} \mathbf{x}_{i}+o_{P}(1).
\end{align*}
Note that $r_n$ and $1/r_n$ are bounded. By Lemma \ref{lemma-tight} in the Appendix
\begin{equation}
r_{n}^{-1}\frac{1}{\sqrt{n}} \sum\limits_{i=1}^{n}\psi_{1}\left( \frac{u_i}{s_{n}}\right)\mathbf{a}_{n}^{T} \boldsymbol{\Sigma}_{n}^{-1} \mathbf{x}_{i} \cw N(0,a(\psi_1)).
\nonumber
\end{equation}
The theorem now follows from Slutzky's Theorem.
\end{proof}

\section{Appendix}
\label{Section-App}
\begin{proof}[Proof of Lemma \ref{lemma:cond_davies}]

Let $0<\alpha<1$ be such that $\liminf \eta_n(\alpha)>0$. Note that for all $\eta>0$, $\#\lbrace i: \Vert \mathbf{x}_i \Vert \geq \eta \rbrace \leq n M p \eta^{-2}$. Take $\eta=\sqrt{2Mp(1-\alpha)^{-1}}$. Let $\alpha_1= (1/2) (1+\alpha)$, $\eta_1 = \sqrt{2M(1-\alpha)^{-1}}$ and $\mathcal{A}= \lbrace i: \Vert \mathbf{x}_i \Vert < \eta_1 \sqrt{p} \rbrace$. Then $\#\mathcal{A} \geq n \alpha_1$, with $0<\alpha<\alpha_1 < 1$.

Take $\boldsymbol{\theta}^{*}$ with $\Vert \boldsymbol{\theta}^{*} \Vert =1$ such that
\begin{align*}
\sum\limits_{i \in \mathcal{A}} \vert \mathbf{x}_{i}^{T} \boldsymbol{\theta}^{*} \vert^{2} = \min_{\Vert \boldsymbol{\theta} \Vert =1} \sum\limits_{i \in \mathcal{A}} \vert \mathbf{x}_{i}^{T} \boldsymbol{\theta}\vert^{2}.
\end{align*}
Let $\mathcal{G}$ be the set of $i\in \mathcal{A}$ giving rise to the smallest $[n \alpha]$ values of $\vert \mathbf{x}_{i}^{T} \boldsymbol{\theta}^{*} \vert$. Then, by definition of $\eta_{n}(\alpha)$, $\eta_{n}(\alpha)\leq \max_{i\in\mathcal{G}} \vert \mathbf{x}_{i}^{T} \boldsymbol{\theta}^{*} \vert$. Hence, $ \eta_n(\alpha) \leq  \vert \mathbf{x}_{i}^{T} \boldsymbol{\theta}^{*} \vert$ for all $i\in \mathcal{A}\setminus\mathcal{G}$.
Thus
\begin{align*}
\min_{\Vert \boldsymbol{\theta} \Vert =1} \frac{1}{n}\sum\limits_{i \in \mathcal{A}} \vert \mathbf{x}_{i}^{T} \boldsymbol{\theta}\vert^{2} =  \frac{1}{n} \sum\limits_{i \in \mathcal{A}} \vert \mathbf{x}_{i}^{T} \boldsymbol{\theta}^{*} \vert^{2} &\geq \frac{1}{n} \sum\limits_{i \in \mathcal{A}\setminus\mathcal{G}} \vert \mathbf{x}_{i}^{T} \boldsymbol{\theta}^{*} \vert^{2}
\\ &\geq \frac{(n\alpha_1 - [n\alpha])\eta_{n}(\alpha)^{2}}{n} 
\\ &\geq (\alpha_1 - \alpha) \eta_n(\alpha)^{2}.
\end{align*}
The lemma is proven.
\end{proof}

We will make extensive use of the tools from empirical processes theory that appear in \citep{Pollard89} and \citep{vdv}. The results in \citep{Pollard89}, in particular the maximal inequalities of Theorem 4.2, are stated for i.i.d random variables. In Theorem \ref{Theo-Pollard} we adapt Theorem 4.2 of \citep{Pollard89} to make it directly applicable to our scenario of interest.

We first introduce some notation. Let $\varepsilon>0$. Let $\mathcal{H}$ be a class of funcions defined on $\mathbb{R}^{d}$ and let $\Vert . \Vert$ be a pseudo-norm on $\mathcal{H}$.
\begin{itemize}
\item The capacity number of $\mathcal{H}$, $D(\varepsilon,\mathcal{H},\Vert . \Vert)$, is the largest $N$ such that there exists $h_1,\dots,h_N$ in $\mathcal{H}$ with $\Vert h_i - h_j\Vert >\varepsilon$ for all $i\neq j$. The capacity number is also called the packing number in the literature.
\item The covering number of $\mathcal{H}$, $N(\varepsilon,\mathcal{H},\Vert . \Vert)$, is the minimal number of open balls of radius $\varepsilon$ needed to cover $\mathcal{H}$.
\item Given two functions $h, g$ a bracket $[h,g]$ is the set of all functions $f$ such that $h\leq f \leq g$. An $\varepsilon$-bracket is a bracket $[h,g]$ such that $\Vert h-g \Vert<\varepsilon$. $N_{[\:]}(\varepsilon,\mathcal{H},\Vert . \Vert)$ is the bracketing number of $\mathcal{H}$, that is, the minimum number of $\varepsilon$-brackets needed to cover $\mathcal{H}$.
\item Given a metric space $(T,d)$, the covering number of $T$, $N(\varepsilon,T,d)$, is the minimal number of open balls of radius $\varepsilon$ needed to cover $T$.
\end{itemize}
It is easy to show that $D(\varepsilon,\mathcal{H},\Vert . \Vert) \leq N(\varepsilon/2,\mathcal{H},\Vert . \Vert) \leq N_{[\:]}(\varepsilon,\mathcal{H},\Vert . \Vert)$. Given $Q$, a probability measure on $\mathbb{R}^{d}$ with finite support, let $\Vert . \Vert_{2,Q}$ be the $L^{2}(Q)$ pseudo-norm.

\begin{theorem}
\label{Theo-Pollard}
Let $\mathbf{z}_{1},\dots,\mathbf{z}_{n}$ be fixed vectors in $\mathbb{R}^{d}$. Let $\mathbf{v}_1,\dots,\mathbf{v}_n$ be i.i.d. random vectors in $\mathbb{R}^{m}$. Let $\mathcal{H}$ be a class of functions defined in $\mathbb{R}^{m+d}$ and taking values in $\mathbb{R}$. Assume $\mathcal{H}$ has envelope $H$ that satisfies
\begin{align*}
\frac{1}{n} \sum\limits_{i=1}^{n} \mathbb{E}H^{2}(\mathbf{v}_{i},\mathbf{z}_{i})<\infty
\end{align*} and that $\mathcal{H}$ contains the zero function. Furthermore, assume that there exists a decreasing function $D(\varepsilon)$ that satisfies $\int_{0}^{1} \left( \log D(\varepsilon)\right)^{1/2} d\varepsilon < \infty$, such that for all $0<\varepsilon<1$ and any probability measure on $\mathbb{R}^{m+d}$ with finite support $Q$ with $\Vert H \Vert_{2,Q}>0$, $
D(\varepsilon \Vert H\Vert_{2,Q}, \mathcal{H},\Vert . \Vert_{2,Q})\leq D(\varepsilon)$.
Then
\begin{enumerate}
\item[(i)]\begin{align*}
\mathbb{E}\sup_{\mathcal{H}}\left\vert \frac{1}{\sqrt{n}} \sum\limits_{i=1}^{n} \left( h(\mathbf{v}_i,\mathbf{z}_i) - \mathbb{E}h(\mathbf{v}_i,\mathbf{z}_i)\right)\right\vert 
\\ \leq M \left(\frac{1}{n}\sum\limits_{i=1}^{n} \mathbb{E}H^{2}(\mathbf{v}_{i},\mathbf{z}_{i}) \right)^{1/2}  \left( \int\limits_{0}^{1} \left( \log D(\varepsilon) \right)^{1/2} d\varepsilon \right)       ,
\end{align*}
\item[(ii)]\begin{align*}
\mathbb{E}\sup_{\mathcal{H}}\left\vert \frac{1}{\sqrt{n}} \sum\limits_{i=1}^{n} \left( h(\mathbf{v}_i,\mathbf{z}_i) - \mathbb{E}h(\mathbf{v}_i,\mathbf{z}_i)\right)\right\vert^{2} 
\\ \leq M \frac{1}{n}\sum\limits_{i=1}^{n}\mathbb{E}H^{2}(\mathbf{v}_{i},\mathbf{z}_{i}) \: \left( \int\limits_{0}^{1} \left( \log D(\varepsilon) \right)^{1/2} d\varepsilon \right)^{2} , 
\end{align*}
\end{enumerate}
where $M>0$ is a fixed universal constant.
\end{theorem}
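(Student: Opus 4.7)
The plan is to follow the symmetrization-plus-chaining argument behind Pollard's original Theorem 4.2, pointing out at each step that identical distribution of the $(\mathbf{v}_i,\mathbf{z}_i)$ is nowhere essential---what the argument actually uses is independence, which holds here because the $\mathbf{v}_i$'s are i.i.d. and the $\mathbf{z}_i$'s are deterministic.

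First I would symmetrize. Introducing an independent ghost sample $(\mathbf{v}_i')_{i=1}^n$ (coupled with the same deterministic $\mathbf{z}_i$'s) and i.i.d. Rademacher signs $\varepsilon_1,\dots,\varepsilon_n$ independent of everything, the standard desymmetrization inequality yields
\begin{align*}
\mathbb{E}\sup_{h\in\mathcal{H}}\left|\frac{1}{\sqrt{n}}\sum_{i=1}^{n}(h(\mathbf{v}_i,\mathbf{z}_i)-\mathbb{E}h(\mathbf{v}_i,\mathbf{z}_i))\right| \leq 2\,\mathbb{E}\sup_{h\in\mathcal{H}}\left|\frac{1}{\sqrt{n}}\sum_{i=1}^{n}\varepsilon_i h(\mathbf{v}_i,\mathbf{z}_i)\right|,
\end{align*}
together with the analogous inequality for the squared supremum (up to a universal constant). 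Both inequalities are proved by a termwise Jensen argument on the ghost sample, so non-identical laws for the $\mathbf{v}_i$'s cause no difficulty and the fixed $\mathbf{z}_i$'s play a passive role.

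Next I would condition on $(\mathbf{v}_i)$ and run a chaining argument in the Rademacher variables alone. Let $Q_n$ denote the empirical probability measure on $\mathbb{R}^{m+d}$ placing mass $1/n$ at each $(\mathbf{v}_i,\mathbf{z}_i)$. The symmetrized process $h\mapsto n^{-1/2}\sum_i\varepsilon_i h(\mathbf{v}_i,\mathbf{z}_i)$ has sub-Gaussian increments with respect to $\Vert\cdot\Vert_{2,Q_n}$, and the uniform covering-number hypothesis guarantees that $D(\varepsilon\Vert H\Vert_{2,Q_n},\mathcal{H},\Vert\cdot\Vert_{2,Q_n})\leq D(\varepsilon)$ for every realization of the sample. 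Dudley's chaining (together with the standard equivalence between packing and covering numbers up to a factor of two in the radius) then produces, conditionally on $(\mathbf{v}_i)$, an $L^1$ bound of order $\Vert H\Vert_{2,Q_n}\int_0^1(\log D(\varepsilon))^{1/2}\,d\varepsilon$ on the supremum and an $L^2$ bound of order $\Vert H\Vert_{2,Q_n}^2\bigl(\int_0^1(\log D(\varepsilon))^{1/2}\,d\varepsilon\bigr)^2$ on its square.

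Finally I would take expectation over $(\mathbf{v}_i)$ using
\begin{align*}
\mathbb{E}\Vert H\Vert_{2,Q_n}\leq\left(\frac{1}{n}\sum_{i=1}^{n}\mathbb{E}H^2(\mathbf{v}_i,\mathbf{z}_i)\right)^{1/2},\qquad \mathbb{E}\Vert H\Vert_{2,Q_n}^2=\frac{1}{n}\sum_{i=1}^{n}\mathbb{E}H^2(\mathbf{v}_i,\mathbf{z}_i),
\end{align*}
to recover (i) and (ii) respectively. The main obstacle will be the bookkeeping around the ``independent but not identically distributed'' setting: I have to verify that the symmetrization inequality remains valid when the $\mathbf{v}_i$'s have different laws, and, more importantly, that the hypothesis---phrased for \emph{every} finitely supported probability measure $Q$---applies in particular to the random $Q_n$ regardless of the joint distribution of the sample. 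This is precisely why the hypothesis is imposed uniformly in $Q$ in the first place. Once these two points are settled, the remainder of the argument is Pollard's word for word.
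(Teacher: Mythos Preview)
Your proposal is correct and follows the same overall architecture as the paper: symmetrize with a ghost sample and random signs, condition on the data, chain against the empirical $L^2(\mathbb{P}_n)$ metric using the uniform entropy bound, then take expectation over the data via $\mathbb{E}\Vert H\Vert_{2,\mathbb{P}_n}^2 = n^{-1}\sum_i \mathbb{E}H^2(\mathbf{v}_i,\mathbf{z}_i)$. The one substantive difference is in how the chaining step is executed. The paper does not chain on the Rademacher process directly; instead it inserts an extra step that replaces the Rademacher multipliers $\sigma_i$ by standard Gaussians $g_i$ (using $\sigma_i = g_i/|g_i|$ and $\mathbb{E}|g_i|=\gamma$), obtaining a conditionally Gaussian process $Z_n(h,\mathbf{v})=n^{-1/2}\sum_i g_i h(\mathbf{v}_i,\mathbf{z}_i)$, and then invokes Pollard's Theorems~3.2 and~3.3 for Gaussian processes to obtain the $L^1$ and $L^2$ entropy-integral bounds. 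Your route---chaining the Rademacher process directly via its sub-Gaussian increments and Dudley's inequality---is slightly more streamlined and avoids the Gaussian comparison, but it requires an $L^2$ (not just $L^1$) version of Dudley's bound for sub-Gaussian processes; the paper's detour through Gaussians lets it quote Pollard's ready-made $L^2$ inequality verbatim. Either way the assumption $0\in\mathcal{H}$ is what anchors the chaining so that $\sup_{\mathcal{H}}|Z_n(h)|$ (rather than only increments) is controlled by the entropy integral.
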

\begin{proof}
The proof of this theorem is a simple adaptation of the proof of Theorem 4.2 of \citep{Pollard89}.

We prove (ii). Let $\mathbb{P}_n$ be the empirical probability measure that places mass $1/n$ at each of the points $(\mathbf{v}_{i},\mathbf{z}_{i})$ $i=1,\dots,n$. Let $\Vert . \Vert_{2,n}$ be the $L^{2}(\mathbb{P}_n)$ pseudo-norm.
Let $\tilde{\mathbf{v}}_1,\dots,\tilde{\mathbf{v}}_n$ be i.i.d. random vectors independent of and with the same distribution as $\mathbf{v}_1,\dots,\mathbf{v}_n$. With a slight abuse of notation denote $\mathbf{v}=(\mathbf{v}_1,\dots,\mathbf{v}_n)$ and let $\mathbb{E}_{\mathbf{v}}$ be the expectation conditional on $\mathbf{v}$. It follows that for all $i=1,\dots,n$, $h(\mathbf{v}_i,\mathbf{z}_i)=\mathbb{E}_{\mathbf{v}}h(\mathbf{v}_i,\mathbf{z}_i)$ and $\mathbb{E}h(\mathbf{v}_i,\mathbf{z}_i)=\mathbb{E}h(\tilde{\mathbf{v}}_i,\mathbf{z}_i)=\mathbb{E}_{\mathbf{v}}h(\tilde{\mathbf{v}}_i,\mathbf{z}_i)$. Then, for all $h\in\mathcal{H}$
\begin{align*}
\frac{1}{\sqrt{n}} \sum\limits_{i=1}^{n} \left( h(\mathbf{v}_i,\mathbf{z}_i) - \mathbb{E}h(\mathbf{v}_i,\mathbf{z}_i)\right)=\mathbb{E}_{\mathbf{v}} \frac{1}{\sqrt{n}} \sum\limits_{i=1}^{n} \left( h(\mathbf{v}_i,\mathbf{z}_i) - h(\tilde{\mathbf{v}}_{i},\mathbf{z}_i)\right).
\end{align*}
By Jensen's inequality
\begin{align*}
\left\vert \mathbb{E}_{\mathbf{v}} \frac{1}{\sqrt{n}} \sum\limits_{i=1}^{n} \left( h(\mathbf{v}_i,\mathbf{z}_i) - h(\tilde{\mathbf{v}}_{i},\mathbf{z}_i)\right) \right\vert^{2} \leq \mathbb{E}_{\mathbf{v}} \left\vert  \frac{1}{\sqrt{n}} \sum\limits_{i=1}^{n} \left( h(\mathbf{v}_i,\mathbf{z}_i) - h(\tilde{\mathbf{v}}_{i},\mathbf{z}_i)\right) \right\vert^{2}.
\end{align*}
Hence
\begin{align*}
\mathbb{E}\sup_{\mathcal{H}}\left\vert \frac{1}{\sqrt{n}} \sum\limits_{i=1}^{n} \left( h(\mathbf{v}_i,\mathbf{z}_i) - \mathbb{E}h(\mathbf{v}_i,\mathbf{z}_i)\right)\right\vert^{2} \leq \mathbb{E}\sup_{\mathcal{H}} \mathbb{E}_{\mathbf{v}} \left\vert  \frac{1}{\sqrt{n}} \sum\limits_{i=1}^{n} \left( h(\mathbf{v}_i,\mathbf{z}_i) - h(\tilde{\mathbf{v}}_{i},\mathbf{z}_i)\right) \right\vert^{2}
\\ \leq \mathbb{E} \mathbb{E}_{\mathbf{v}} \sup_{\mathcal{H}} \left\vert  \frac{1}{\sqrt{n}} \sum\limits_{i=1}^{n} \left( h(\mathbf{v}_i,\mathbf{z}_i) - h(\tilde{\mathbf{v}}_{i},\mathbf{z}_i)\right) \right\vert^{2}
=\mathbb{E} \sup_{\mathcal{H}} \frac{1}{n}\left\vert   \sum\limits_{i=1}^{n} \left( h(\mathbf{v}_i,\mathbf{z}_i) - h(\tilde{\mathbf{v}}_{i},\mathbf{z}_i)\right) \right\vert^{2}.
\end{align*}
Let $g_1,\dots,g_n$ be i.i.d random variables, independent of $\tilde{\mathbf{v}}_1,\dots,\tilde{\mathbf{v}}_n$ and of $\mathbf{v}_1,\dots,\mathbf{v}_n$ such that $g_i \sim N(0,1)$. Define $\sigma_i=g_i /\vert g_i\vert$ for $i=1,\dots,n$. Then $\sigma_1,\dots,\sigma_n$ are independent of $\tilde{\mathbf{v}}_1,\dots,\tilde{\mathbf{v}}_n$ and of $\mathbf{v}_1,\dots,\mathbf{v}_n$. Note that $\mathbb{P}(\sigma_i=1)=\mathbb{P}(\sigma_i=-1)=1/2$ and that $\sigma_i$ is independent of $\vert g_i\vert$. Let $\boldsymbol{\sigma}=(\sigma_1,\dots,\sigma_n)$. By the symmetry between $\tilde{\mathbf{v}}_i$ and $\mathbf{v}_i$ we have that
\begin{align*}
\mathbb{E} \sup_{\mathcal{H}} \frac{1}{n}\left\vert   \sum\limits_{i=1}^{n} \left( h(\mathbf{v}_i,\mathbf{z}_i) - h(\tilde{\mathbf{v}}_{i},\mathbf{z}_i)\right) \right\vert^{2} = \mathbb{E} \sup_{\mathcal{H}} \frac{1}{n}\left\vert   \sum\limits_{i=1}^{n} \sigma_i \left( h(\mathbf{v}_i,\mathbf{z}_i) - h(\tilde{\mathbf{v}}_{i},\mathbf{z}_i)\right) \right\vert^{2}.
\end{align*}
Now 
\begin{align*}
\sup_{\mathcal{H}} \left\vert   \sum\limits_{i=1}^{n}\sigma_i \left( h(\mathbf{v}_i,\mathbf{z}_i) - h(\tilde{\mathbf{v}}_{i},\mathbf{z}_i)\right) \right\vert\leq \sup_{\mathcal{H}} \left\vert   \sum\limits_{i=1}^{n}\sigma_i h(\mathbf{v}_i,\mathbf{z}_i) \right\vert + \sup_{\mathcal{H}} \left\vert   \sum\limits_{i=1}^{n}\sigma_i h(\tilde{\mathbf{v}}_i,\mathbf{z}_i) \right\vert.
\end{align*}
Hence
\begin{align*}
\mathbb{E} \sup_{\mathcal{H}} \frac{1}{n}\left\vert   \sum\limits_{i=1}^{n} \sigma_i \left( h(\mathbf{v}_i,\mathbf{z}_i) - h(\tilde{\mathbf{v}}_{i},\mathbf{z}_i)\right) \right\vert^{2} \leq 4 \mathbb{E} \sup_{\mathcal{H}} \frac{1}{n}\left\vert   \sum\limits_{i=1}^{n}\sigma_i h(\mathbf{v}_i,\mathbf{z}_i) \right\vert^{2}.
\end{align*}
Let $\gamma$ be the expectation of $\vert g_1 \vert$ and let $\mathbb{E}_{\mathbf{v},\boldsymbol{\sigma}}$ be the expectation conditional on $\mathbf{v}$ and $\boldsymbol{\sigma}$. Then for all $i=1,\dots,n$,
$\mathbb{E}_{\mathbf{v},\boldsymbol{\sigma}} \sigma_i  h(\mathbf{v}_i,\mathbf{z}_i)=\sigma_i  h(\mathbf{v}_i,\mathbf{z}_i)$ and $\mathbb{E}_{\mathbf{v},\boldsymbol{\sigma}} \vert g_i\vert=\gamma$. Hence, applying Jensen's inequality
\begin{align*}
\mathbb{E} \sup_{\mathcal{H}} \frac{1}{n}\left\vert   \sum\limits_{i=1}^{n}\sigma_i h(\mathbf{v}_i,\mathbf{z}_i) \right\vert^{2} &= \mathbb{E} \sup_{\mathcal{H}} \frac{1}{n}\left\vert   \sum\limits_{i=1}^{n}\sigma_i h(\mathbf{v}_i,\mathbf{z}_i) \mathbb{E}_{\mathbf{v},\boldsymbol{\sigma}} \vert g_i\vert / \gamma\right\vert^{2}
\\ = \mathbb{E} \sup_{\mathcal{H}} \frac{1}{n}\left\vert   \mathbb{E}_{\mathbf{v},\boldsymbol{\sigma}}\sum\limits_{i=1}^{n}\sigma_i h(\mathbf{v}_i,\mathbf{z}_i)  \vert g_i\vert / \gamma\right\vert^{2} &= \mathbb{E} \sup_{\mathcal{H}} \frac{1}{n}\left\vert   \mathbb{E}_{\mathbf{v},\boldsymbol{\sigma}}\sum\limits_{i=1}^{n} g_i h(\mathbf{v}_i,\mathbf{z}_i)/\gamma\right\vert^{2}
\\ \leq \mathbb{E} \sup_{\mathcal{H}} \mathbb{E}_{\mathbf{v},\boldsymbol{\sigma}} \frac{1}{n}\left\vert   \sum\limits_{i=1}^{n} g_i h(\mathbf{v}_i,\mathbf{z}_i)/\gamma\right\vert^{2}
& \leq \mathbb{E}  \mathbb{E}_{\mathbf{v},\boldsymbol{\sigma}} \sup_{\mathcal{H}} \frac{1}{n}\left\vert   \sum\limits_{i=1}^{n} g_i h(\mathbf{v}_i,\mathbf{z}_i)/\gamma\right\vert^{2}
\\ &= \gamma^{-2} \mathbb{E} \sup_{\mathcal{H}} \frac{1}{n}\left\vert   \sum\limits_{i=1}^{n} g_i h(\mathbf{v}_i,\mathbf{z}_i)\right\vert^{2}.
\end{align*}
In summary, we have shown that
\begin{align}
\mathbb{E}\sup_{\mathcal{H}}\left\vert \frac{1}{\sqrt{n}} \sum\limits_{i=1}^{n} \left( h(\mathbf{v}_i,\mathbf{z}_i) - \mathbb{E}h(\mathbf{v}_i,\mathbf{z}_i)\right)\right\vert^{2} \leq 4 \gamma^{-2} \mathbb{E} \sup_{\mathcal{H}} \frac{1}{n}\left\vert   \sum\limits_{i=1}^{n} g_i h(\mathbf{v}_i,\mathbf{z}_i)\right\vert^{2}.
\label{Eq:6-Pollard}
\end{align}
Define for $h\in\mathcal{H}$, $
Z_n(h,\mathbf{v})=(1/\sqrt{n})\sum\limits_{i=1}^{n} g_i h(\mathbf{v}_i,\mathbf{z}_i)$.
Then \eqref{Eq:6-Pollard} can be written as
\begin{align}
\mathbb{E}\sup_{\mathcal{H}}\left\vert \frac{1}{\sqrt{n}} \sum\limits_{i=1}^{n} \left( h(\mathbf{v}_i,\mathbf{z}_i) - \mathbb{E}h(\mathbf{v},\mathbf{z}_i)\right)\right\vert^{2} \leq 4 \gamma^{-2}  \mathbb{E} \sup_{\mathcal{H}} \left\vert Z_n(h,\mathbf{v}) \right\vert^{2}.
\label{Eq:7-Pollard}
\end{align}
Note that, conditionally on the $\mathbf{v}_i$, $Z_n$ is a zero-mean Gaussian process with increments bounded by the $L^{2}(\mathbb{P}_n)$ pseudo-norm: for all $h_1,h_2\in\mathcal{H}$
\begin{align*}
&\mathbb{E}_{\mathbf{v}} \left\vert Z_n(h_1,\mathbf{v}) - Z_n(h_2,\mathbf{v}) \right\vert^{2} 
\\ &= \frac{1}{n}\mathbb{E}_{\mathbf{v}} \sum_{i,j} (h_1(\mathbf{v}_i,\mathbf{z}_i) - h_2(\mathbf{v}_i,\mathbf{z}_i)) (h_1(\mathbf{v}_j,\mathbf{z}_j) - h_2(\mathbf{v}_j,\mathbf{z}_j)) g_i g_j 
\\ &= \frac{1}{n} \sum_{i,j} (h_1(\mathbf{v}_i,\mathbf{z}_i) - h_2(\mathbf{v}_i,\mathbf{z}_i)) (h_1(\mathbf{v}_j,\mathbf{z}_j) - h_2(\mathbf{v}_j,\mathbf{z}_j)) \mathbb{E}_{\mathbf{v}}g_i g_j 
\\ &= \frac{1}{n} \sum_{i=1}^{n} (h_1(\mathbf{v}_i,\mathbf{z}_i) - h_2(\mathbf{v}_i,\mathbf{z}_i))^{2} = \Vert h_1 -h_2\Vert_{2,n}^{2}.
\end{align*}
Also, for fixed $\mathbf{v}$, $Z_n$ has continuous sample paths in the $L^{2}(\mathbb{P}_n)$ pseudo-norm: if $\Vert h -h_k\Vert_{2,n} \rightarrow 0$ when $k\rightarrow \infty$ then $h_k(\mathbf{v}_i,\mathbf{z}_i) \rightarrow h(\mathbf{v}_i,\mathbf{z}_i)$ for all $i=1,\dots,n$ and hence $Z_n(h_k,\mathbf{v}) \rightarrow Z_n(h,\mathbf{v})$ for each realization of the $g_i$.
Therefore, we can apply Theorem 3.3 of \citep{Pollard89}: there exists an universal constant $K>0$ such that
\begin{align}
\left( \mathbb{E}_{\mathbf{v}} \sup_{\mathcal{H}} \left\vert Z_n(h,\mathbf{v}) \right\vert^{2} \right)^{1/2} \leq K \int_{0}^{\Delta(\mathbf{v})} \left( \log D(x,\mathcal{H},\Vert . \Vert_{2,n}) \right)^{1/2} dx,
\label{Eq:Teo3-Pollard}
\end{align}
where $\Delta(\mathbf{v})=\sup_{\mathcal{H}} \Vert h \Vert_{2,n}$. 

If $\Vert H \Vert_{2,n}>0$, since by assumption $D(\varepsilon\Vert H \Vert_{2,n},\mathcal{H},\Vert . \Vert_{2,n}) \leq D(\varepsilon)$ for all $0<\varepsilon<1$, we have that
\begin{align*}
 \int_{0}^{1} \left( \log D(\varepsilon\Vert H \Vert_{2,n},\mathcal{H},\Vert . \Vert_{2,n}) \right)^{1/2} d\varepsilon \leq \int_{0}^{1} \left( \log D(\varepsilon) \right)^{1/2} d\varepsilon < \infty.
\end{align*}
Also, since $\Delta(\mathbf{v})/\Vert H \Vert_{2,n} \leq 1$
\begin{align*}
\int_{0}^{\Delta(\mathbf{v})/\Vert H \Vert_{2,n}} \left( \log D(\varepsilon\Vert H \Vert_{2,n},\mathcal{H},\Vert . \Vert_{2,n}) \right)^{1/2} d\varepsilon
 \\ \leq \int_{0}^{1} \left( \log D(\varepsilon\Vert H \Vert_{2,n},\mathcal{H},\Vert . \Vert_{2,n}) \right)^{1/2} d\varepsilon
\end{align*}
The change of variables $x=\varepsilon\Vert H \Vert_{2,n}$ gives
\begin{align*}
\Vert H \Vert_{2,n} \int_{0}^{\Delta(\mathbf{v})/\Vert H \Vert_{2,n}} \left( \log D(\varepsilon\Vert H \Vert_{2,n},\mathcal{H},\Vert . \Vert_{2,n}) \right)^{1/2} d\varepsilon
\\=
\int_{0}^{\Delta(\mathbf{v})} \left( \log D(x,\mathcal{H},\Vert . \Vert_{2,n}) \right)^{1/2} dx. 
\end{align*}
Then
\begin{align}
\left( \mathbb{E}_{\mathbf{v}} \sup_{\mathcal{H}} \left\vert Z_n(h,\mathbf{v}) \right\vert^{2} \right)^{1/2} \leq K \Vert H \Vert_{2,n} \int_{0}^{1} \left( \log D(\varepsilon) \right)^{1/2} d\varepsilon.
\label{Eq:8-Pollard}
\end{align}
On the other hand, if $\Vert H \Vert_{2,n}=0$ then  $\Delta(\mathbf{v})=0$ and this implies that the right hand side of \eqref{Eq:Teo3-Pollard} is zero. In this case \eqref{Eq:8-Pollard} holds trivially. 

We have thus shown that
\begin{align*}
&\mathbb{E}\sup_{\mathcal{H}}\left\vert \frac{1}{\sqrt{n}} \sum\limits_{i=1}^{n} \left( h(\mathbf{v}_i,\mathbf{z}_i) - \mathbb{E}h(\mathbf{v}_i,\mathbf{z}_i)\right)\right\vert^{2} 
\\ & \leq 4 \gamma^{-2}  \mathbb{E} \sup_{\mathcal{H}} \left\vert Z_n(h,\mathbf{v}) \right\vert^{2} 
\\ &= 4 \gamma^{-2}  \mathbb{E}\mathbb{E}_{\mathbf{v}} \sup_{\mathcal{H}} \left\vert Z_n(h,\mathbf{v}) \right\vert^{2}
\\ &\leq 4 \gamma^{-2} K^{2}  \mathbb{E}\Vert H \Vert_{2,n}^{2} \left( \int_{0}^{1}\left( \log D(\varepsilon) \right)^{1/2} d\varepsilon \right)^{2}
\\ &= 4 \gamma^{-2} K^{2}  \frac{1}{n}\sum\limits_{i=1}^{n} \mathbb{E}H^{2}(\mathbf{v}_i,\mathbf{z}_i) \left( \: \int_{0}^{1}\left( \log D(\varepsilon) \right)^{1/2} d\varepsilon\right)^{2} ,
\end{align*}
which is what we wanted to prove.
Part (i) can be proved by substituting $L^{1}(\mathbb{P}_n)$ norms by $L^{2}(\mathbb{P}_n)$ in the arguments leading to \eqref{Eq:7-Pollard} and then applying Theorem 3.2 of \citep{Pollard89}.
\end{proof}

The following lemma is a key result in the proof of the consistency of the estimators.
\begin{lemma}
\label{lemma2-unif-davies}
Assume $\rho$ is a bounded $\rho$-function. Consider the class of functions
\begin{align*}
\mathcal{H}=\left\lbrace h_{s,\mathbf{b}} (u,\mathbf{x}) = \rho \left( \frac{u-\mathbf{x}^{T}\mathbf{b%
}}{s}\right) :  \: \mathbf{b}\in \mathbb{R}^{p} \:, s>0\right\rbrace.
\end{align*}
Then, if $p/n \rightarrow 0$,
\begin{align*}
\sup_{h\in\mathcal{H}} \left\vert (1/n) \sum\limits_{i=1}^{n}  \left( h(u_i,\mathbf{x}_{i}) - \mathbb{E}h(u,\mathbf{x}_{i})   \right) \right\vert \cp 0.
\end{align*}
\end{lemma}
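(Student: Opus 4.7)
The plan is to apply Theorem~\ref{Theo-Pollard}(ii) with $\mathbf{v}_i = u_i$ and $\mathbf{z}_i = \mathbf{x}_i$. Since $\rho$ is a bounded $\rho$-function, $\rho$ takes values in $[0,1]$, so the constant function $H \equiv 1$ is an envelope for $\mathcal{H}$, with $(1/n)\sum_{i=1}^n \mathbb{E} H^{2}(u_i,\mathbf{x}_i) = 1$. Adjoining the identically zero function to $\mathcal{H}$ is harmless for both the envelope and the supremum to be controlled, so the only nontrivial task is to exhibit a decreasing function $D(\varepsilon)$ with $\int_0^1 (\log D(\varepsilon))^{1/2}\, d\varepsilon < \infty$ such that $D(\varepsilon, \mathcal{H}, \|\cdot\|_{2,Q}) \le D(\varepsilon)$ uniformly over finitely supported probability measures $Q$.

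The key step is the uniform covering-number bound $D(\varepsilon, \mathcal{H}, \|\cdot\|_{2,Q}) \le C_1\, \varepsilon^{-C_2 p}$, with constants $C_1, C_2$ independent of $n$, $p$ and $Q$. The linear family $\mathcal{L} = \{(u,\mathbf{x}) \mapsto (u - \mathbf{x}^T\mathbf{b})/s : \mathbf{b} \in \mathbb{R}^p,\ s > 0\}$ lies in the $(p+1)$-dimensional vector space spanned by the coordinate functions $u, x_1, \ldots, x_p$, and hence is VC-subgraph with index at most $p+3$ by Lemma~2.6.15 of~\citep{vdv}. Since $\rho$ is even and nondecreasing in $|y|$, it admits the decomposition
\[
\rho(y) = \phi_{+}(y) + \phi_{-}(y), \qquad \phi_{+}(y) := \rho(\max(y,0)), \quad \phi_{-}(y) := \rho(\max(-y,0)),
\]
with $\phi_{+}$ bounded and nondecreasing and $\phi_{-}$ bounded and nonincreasing. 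Composition with a bounded monotone function preserves the VC-subgraph property with the same index (Section~2.6 of~\citep{vdv}), so each of the classes $\mathcal{H}_{\pm} = \{\phi_{\pm} \circ f : f \in \mathcal{L}\}$ is VC-subgraph with index $O(p)$ and, by Theorem~2.6.7 of~\citep{vdv}, satisfies $D(\varepsilon, \mathcal{H}_{\pm}, \|\cdot\|_{2,Q}) \le C \varepsilon^{-cp}$. Because $L^{2}(Q)$-covering numbers multiply under pointwise sums of function classes, $\mathcal{H} = \mathcal{H}_{+} + \mathcal{H}_{-}$ inherits the claimed bound (with doubled constants in the exponent). In particular $\int_0^1 (\log D(\varepsilon))^{1/2}\, d\varepsilon = O(\sqrt{p})$.

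Theorem~\ref{Theo-Pollard}(ii) then yields
\[
\mathbb{E}\sup_{h \in \mathcal{H}} \left| \frac{1}{\sqrt n}\sum_{i=1}^n \bigl(h(u_i,\mathbf{x}_i) - \mathbb{E}h(u,\mathbf{x}_i)\bigr) \right|^{2} = O(p),
\]
so that $\mathbb{E}\sup_{h \in \mathcal{H}}\,| n^{-1}\sum_{i=1}^n (h - \mathbb{E}h) |^{2} = O(p/n) \to 0$ under the hypothesis $p/n \to 0$, and Markov's inequality delivers the asserted convergence in probability. The main difficulty is securing the \emph{linear} dependence of the entropy exponent on $p$: the naive factorization $\rho(y) = \tilde\rho(y^2)$ would embed $\mathcal{L}$ into an $O(p^{2})$-dimensional vector space of polynomials and would only give a $O(p^{2}/n)$ rate, forcing the strictly stronger condition $p^{2}/n \to 0$. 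Exploiting the even-plus-monotone decomposition above is the device that keeps the entropy exponent linear in $p$ and matches the sufficient condition $p/n \to 0$ stated in the lemma.
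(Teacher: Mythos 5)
Your proof is correct and takes essentially the same route as the paper's: your decomposition $\rho = \phi_{+} + \phi_{-}$ coincides pointwise with the paper's $m^1, m^2$, and you invoke the same chain of tools (Lemma 2.6.15 and Theorem 2.6.7 of van der Vaart--Wellner for the VC-subgraph and uniform entropy bounds of the monotone compositions with the $(p+1)$-dimensional linear class $\mathcal{L}$, multiplicativity of covering numbers under sums, and the maximal inequality of Theorem~\ref{Theo-Pollard}). The only cosmetic difference is that you apply part (ii) of Theorem~\ref{Theo-Pollard} to get an $O(p/n)$ bound on the squared supremum, whereas the paper applies part (i) to get an $O(\sqrt{p/n})$ bound on its first moment; both finish with Markov's inequality under $p/n \rightarrow 0$.
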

\begin{proof}

We will apply the maximal inequalities of Theorem \ref{Theo-Pollard} to $\mathcal{H}\cup \lbrace 0 \rbrace$.

Let $
\mathcal{L}=\left\lbrace l_{s,\mathbf{b}}(u,\mathbf{x})= (u - \mathbf{x}^{T} \mathbf{b})/s: \mathbf{b}\in \mathbb{R}^{p}, \: s>0 \right\rbrace$.
Then $\mathcal{L}$ is a subset of the vector space of all linear functions in $p+1$ variables. This vector space has dimension $p+1$. It follows from Lemma 2.6.15 of \citep{vdv} that $\mathcal{L}$ has VC-index at most $p+3$.

Note that $\rho = m^1 + m^2$, where $m^1(x)=\rho(x)I\lbrace x\geq 0 \rbrace$ and $m^2(x)=\rho(x)I\lbrace x<0 \rbrace$. Note that $m^1$ is non-decreasing and $m^2$ is non-increasing. By Lemma 9.9 (viii) of \citep{Kosorok}, $m^1 \circ \mathcal{L}$ and $m^2\circ\mathcal{L}$ have VC-index at most $p+3$. $m^1\circ\mathcal{L}$ and $m^2\circ\mathcal{L}$ have a constant envelope equal to 1.

Let $Q$ be a probability measure on $\mathbb{R}^{p+1}$ with finite support. Fix $0<\varepsilon<1$.
By Theorem 2.6.7 from \citep{vdv}, for some universal constant $K$ we have that for $i=1,2$
\begin{align*}
N(\varepsilon,m^{i}(\mathcal{L}),\Vert . \Vert_{2,Q})\leq K (p+3) (16e)^{p+3} \varepsilon^{-2(p+2)}.
\end{align*}
Note that $m^{1}\circ \mathcal{L}+m^{2}\circ\mathcal{L}$ has constant envelope equal to 2. It is easy to show that
\begin{align*}
N(2 \varepsilon,m^{1}\circ\mathcal{L}+m^{2}\circ\mathcal{L},\Vert . \Vert_{2,Q}) &\leq N(\varepsilon/2 ,m^{1}\circ\mathcal{L},\Vert . \Vert_{2,Q}) N(\varepsilon/2,m^{2}\circ\mathcal{L},\Vert . \Vert_{2,Q})
\\ &\leq (K (p+3) (16e)^{p+3} \left(\varepsilon/2 \right)^{-2(p+2)})^{2}.
\end{align*}
Note that $\mathcal{H}$ has envelope $H(u,\mathbf{x})=2$ and that $\mathcal{H} \subset m^{1}\circ\mathcal{L}+ m^{2}\circ\mathcal{L}$.
Hence
\begin{align*}
N(\varepsilon \Vert H \Vert_{2,Q},\mathcal{H},\Vert . \Vert_{2,Q})\leq (K (p+3) (16e)^{p+3} \left(\varepsilon/2 \right)^{-2(p+2)})^{2}.
\end{align*}
Furthermore $\mathcal{H}\cup\lbrace 0 \rbrace$ also has envelope $H$. We can assume without loss of generality that $K>1$. Hence,
\begin{align*}
N(\varepsilon \Vert H \Vert_{2,Q},\mathcal{H}\cup \lbrace 0 \rbrace,\Vert . \Vert_{2,Q}) &\leq N( \varepsilon \Vert H \Vert_{2,Q},\mathcal{H},\Vert . \Vert_{2,Q})+1
\\ &\leq (K (p+3) (16e)^{p+3} \left(\varepsilon/2 \right)^{-2(p+2)})^{2} + 1 
\\ &\leq 2(K (p+3) (16e)^{p+3} \left(\varepsilon/2 \right)^{-2(p+2)})^{2}
\end{align*}
implies that
\begin{align*}
D(\varepsilon \Vert H \Vert_{2,Q},\mathcal{H}\cup \lbrace 0 \rbrace,\Vert . \Vert_{2,Q})& \leq N((\varepsilon/2) \Vert H \Vert_{2,Q},\mathcal{H}\cup \lbrace 0 \rbrace,\Vert . \Vert_{2,Q})
\\ &\leq D(\varepsilon)
\end{align*}
where
\begin{align*}
D(\varepsilon)= 2(K (p+3) (16e)^{p+3} \left(\varepsilon/4 \right)^{-2(p+2)})^{2}.
\end{align*}

It follows from Theorem \ref{Theo-Pollard}(i) that for some fixed $C_1>0$
\begin{align*}
\mathbb{E}\sup_{\mathcal{H}} \left\vert \frac{1}{\sqrt{n}} \sum\limits_{i=1}^{n}  \left( h(u_i,\mathbf{x}_{i}) - \mathbb{E}h(u,\mathbf{x}_{i})  \right) \right\vert &\leq \mathbb{E}\sup_{\mathcal{H} \cup \lbrace 0 \rbrace} \left\vert \frac{1}{\sqrt{n}} \sum\limits_{i=1}^{n} \left( h(u_i,\mathbf{x}_{i}) - \mathbb{E}h(u,\mathbf{x}_{i})  \right) \right\vert 
\\ &\leq C_1  \int\limits_{0}^{1} \left( \log D(\varepsilon) \right)^{1/2}.
\end{align*}
Note that $
\log D(\varepsilon) = \log 2 + 2\log(K) + 2 \log \left( p+3 \right) + 2\left( p+3 \right) \log (16e) + 4(p+2) \log \frac{4}{\varepsilon}
\leq C_2  p \left(1 - \log \varepsilon \right)$,
for some fixed $C_2>0$.
Hence
\begin{align*}
\mathbb{E}\sup_{\mathcal{H}} \left\vert \frac{1}{\sqrt{n}} \sum\limits_{i=1}^{n}  \left( h(u_i,\mathbf{x}_{i}) - \mathbb{E}h(u,\mathbf{x}_{i}) \right)  \right\vert \leq  \sqrt{p} C_1 \sqrt{C_2}  \int\limits_{0}^{1} \left(1-\log \varepsilon \right)^{1/2} d\varepsilon=\sqrt{p} C_3
\end{align*}
where $C_3>0$ is fixed.
Finally, the result follows from applying Markov's inequality and the fact that by assumption $p/n \rightarrow 0$.
\end{proof}

\begin{proof}[Proof of Lemma \ref{lemma-conv-sca}]
This follows from Theorem 3 of \citep{Davies}, replacing any appeals in the proof of that theorem to Lemma 2 of \citep{Davies} by appeals to Lemma \ref{lemma2-unif-davies}.
\end{proof}

The following lemma was needed in the proof of Theorem \ref{Theo-rate}.
\begin{lemma}
\label{lemma-orden1}
Assume [R2], [F0] and [X1] a) hold. Let $0<a<b$. 
For $\mathbf{x} \in \mathbb{R}^{p}$, consider the class of functions
$
\mathcal{H}=\left \lbrace h_{s}(u,\mathbf{x})=\psi_1 \left( u/s \right) \mathbf{x}: s\in [a,b] \right\rbrace.
$
Then, for some fixed constant $A>0$ that depends only on $a, b, \psi_1$ and the constant that appears in [X1] a),
\begin{align*}
\mathbb{E}\sup_{h\in\mathcal{H}} \left\Vert \frac{1}{\sqrt{n}} \sum\limits_{i=1}^{n}  h(u_i,\mathbf{x}_{i})  \right\Vert  \leq A \sqrt{p}.
\end{align*}
\end{lemma}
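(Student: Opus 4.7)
The plan is to reduce the vector-valued process to $p$ scalar-valued processes indexed only by the one-dimensional parameter $s\in[a,b]$, then invoke Theorem~\ref{Theo-Pollard}(ii) coordinate by coordinate. Set $V(s) = (1/\sqrt{n})\sum_{i=1}^n \psi_1(u_i/s)\mathbf{x}_i$ and let $V_j(s)$ denote its $j$-th coordinate. The elementary observation $\sup_s \|V(s)\|^2 \leq \sum_{j=1}^p \sup_s V_j(s)^2$, combined with Jensen's inequality, gives
\begin{align*}
\mathbb{E}\sup_{s\in[a,b]} \|V(s)\| \leq \Bigl(\sum_{j=1}^p \mathbb{E}\sup_{s\in[a,b]} V_j(s)^2\Bigr)^{1/2},
\end{align*}
so it suffices to prove $\mathbb{E}\sup_s V_j(s)^2 \leq C\, \Sigma_{n,jj}$, where $\Sigma_{n,jj}$ is the $j$-th diagonal entry of $\boldsymbol{\Sigma}_n$ and $C$ is independent of $n$, $p$, $j$; summing and using $\mathrm{tr}(\boldsymbol{\Sigma}_n) = (1/n)\sum_i \|\mathbf{x}_i\|^2 \leq pM$ from [X1] a) then yields the desired $O(\sqrt{p})$ bound.

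To bound each coordinate, I would apply Theorem~\ref{Theo-Pollard}(ii) to the one-parameter scalar class $\mathcal{H}_j = \{(u,x)\mapsto \psi_1(u/s)x : s\in[a,b]\}\cup\{0\}$, with envelope $G_j(u,x) = \|\psi_1\|_\infty |x|$, for which $(1/n)\sum_i \mathbb{E}G_j^2(u_i,x_{ij}) = \|\psi_1\|_\infty^2 \Sigma_{n,jj}$. The centering required by the theorem is automatic: $\psi_1$ is odd as the derivative of an even $\rho$-function, and [F0] makes the error distribution symmetric, so $\mathbb{E}\psi_1(u/s) = 0$ for every $s>0$. The core ingredient is a uniform covering-number estimate for $\mathcal{H}_j$: from [R2], the boundedness of $t\psi_1'(t)$ by some constant $L$ implies the uniform Lipschitz bound $|\psi_1(u/s_1)-\psi_1(u/s_2)| \leq (L/a)|s_1-s_2|$ for all $u$ and $s_1, s_2\in[a,b]$. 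Multiplying by $|x|$ and taking $\Vert\cdot\Vert_{2,Q}$ for any finitely supported probability measure $Q$ on $\mathbb{R}^2$ gives
\begin{align*}
\|h_{s_1}-h_{s_2}\|_{2,Q} \leq \frac{L}{a\|\psi_1\|_\infty}\, |s_1-s_2|\, \|G_j\|_{2,Q},
\end{align*}
so a mesh of spacing $O(\varepsilon)$ on $[a,b]$ yields a covering number of the form $D(\varepsilon) = C_0/\varepsilon$, which is polynomial in $1/\varepsilon$ and in particular satisfies $\int_0^1 (\log D(\varepsilon))^{1/2} d\varepsilon < \infty$. Theorem~\ref{Theo-Pollard}(ii) then delivers the required coordinate-wise bound.

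The main subtlety is that a more direct application of Theorem~\ref{Theo-Pollard}(ii) to the full vector-valued problem, for instance via the class $\{(u,\mathbf{x})\mapsto \psi_1(u/s)\mathbf{b}^T\mathbf{x} : s\in[a,b], \|\mathbf{b}\|\leq 1\}$, would combine an envelope of order $\|\mathbf{x}\|$ (whose squared $L^2$ average is $O(p)$) with an entropy integral of order $\sqrt{p}$, the latter coming from the fact that covering the unit ball in $\mathbb{R}^p$ requires exponentially many points. That would produce an $O(p)$ bound, which would be useless in the proof of Theorem~\ref{Theo-rate}. The coordinate-wise decomposition is precisely what keeps the parameter space one-dimensional at every stage, so the Lipschitz estimate from [R2] produces a polynomial covering number and the resulting $O(1)$ entropy integral per coordinate combines with $\mathrm{tr}(\boldsymbol{\Sigma}_n) = O(p)$ to give the optimal $O(\sqrt{p})$ rate.
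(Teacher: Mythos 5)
Your proposal is correct and follows essentially the same route as the paper's own proof: the identical coordinate-wise reduction combined with Jensen's inequality, the same envelope $\Vert\psi_1\Vert_\infty\vert x\vert$ with the zero-mean observation from oddness of $\psi_1$ and [F0], the same Lipschitz-in-$s$ estimate derived from the boundedness of $t\psi_1^{\prime}(t)$ under [R2] giving a covering number of order $1/\varepsilon$, and the same application of Theorem \ref{Theo-Pollard}(ii) per coordinate followed by summation via [X1] a). The only cosmetic difference is that the paper routes the covering bound through a bracketing-number result (Theorem 2.7.11 of \citep{vdv}) while you construct the $O(\varepsilon)$-mesh on $[a,b]$ directly, which is an equivalent argument.
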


\begin{proof}
Let $
\mathcal{G}=\left \lbrace g_{s}(u,x)=\psi_1 \left( u/s \right) x: s\in [a,b] \right\rbrace.$
Fix $1\leq j \leq p$. Note that $\mathbb{E}g(u,x_{i,j})=0$ for $g\in\mathcal{G}$ and $i=1,\dots,n$.  Note also that $\mathcal{G}$ has envelope $G(u,x) = \Vert \psi_{1}\Vert_{\infty} \vert x \vert$ and that $
 (1/n)\sum_{i=1}^{n}\mathbb{E}G^{2}(u_i,x_{i,j})=(\Vert \psi_{1}\Vert_{\infty}^{2}/n)\sum_{i=1}^{n} x_{i,j}^{2}<\infty$.
Let $Q$ be a probability measure on $\mathbb{R}^{2}$ with finite support such that $\Vert G \Vert_{2,Q}>0$. This implies that $\Vert x \Vert_{2,Q}>0$

Let $\phi_1(t)=t \psi_{1}^{\prime}(t)$. By [R2], $\phi_1$ is bounded. Also, if $s_1 , s_2 \in [a,b]$, then by the Mean Value Theorem $
\vert g_{s_1}(u,x) - g_{s_2}(u,x)\vert \leq (\Vert \phi_1 \Vert_{\infty}   \vert x \vert \left\vert s_1 -s_2 \right\vert)/a.
$
Then, by Theorem 2.7.11 of \citep{vdv}, for all $\varepsilon>0$ the bracketing number of $\mathcal{G}$ satisfies
\begin{equation}
N_{[ \: ]}(2 \varepsilon \Vert \phi_1 \Vert_{\infty} \frac{1}{a} \Vert \vert x \vert\Vert_{2,Q} , \mathcal{G}, \Vert . \Vert_{2,Q}) \leq N(\varepsilon, [a,b], \vert \: . \: \vert).
\label{cota-bracket-lip-ord1}
\end{equation}
Note that for some constant $C_1$ that depends only on $a$ and $b$, for all $\varepsilon>0$
\begin{equation}
N(\varepsilon, [a,b], \vert \: . \: \vert) \leq  \frac{C_1}{\varepsilon} + 1.
\label{cota-covering-inter-ord1}
\end{equation}
Fix $0<\varepsilon<1$. It follows from \eqref{cota-bracket-lip-ord1} and \eqref{cota-covering-inter-ord1} that
\begin{align*}
N(\varepsilon \Vert G \Vert_{2,Q}, \mathcal{G}, \Vert . \Vert_{2,Q}) & = N(\varepsilon \Vert \psi_{1}\Vert_{\infty} \Vert \vert x \vert \Vert_{2,Q}, \mathcal{G}, \Vert . \Vert_{2,Q}) 
\\ &\leq N_{[\:]}(2 \varepsilon \Vert \psi_{1}\Vert_{\infty} \Vert \vert x \vert \Vert_{2,Q}, \mathcal{G}, \Vert . \Vert_{2,Q}) 
\\ &\leq N( \frac{a \varepsilon  \Vert \psi_1 \Vert_{\infty}}{\Vert \phi_1 \Vert_{\infty} }, [a,b], \vert \: . \: \vert) \\ &\leq \frac{C_1 \Vert \phi_1 \Vert_{\infty}}{a \varepsilon \Vert \psi_1 \Vert_{\infty}} +1 = \frac{C_2}{\varepsilon} +1.
\end{align*}
Note that $\mathcal{G} \cup \lbrace 0 \rbrace$ has envelope $G$, $\mathbb{E}g(u,x_{i,j})=0$ for $g\in\mathcal{G}\cup \lbrace 0 \rbrace$ and $i=1,\dots,n$, and that
\begin{align*}
N(\varepsilon \Vert G \Vert_{2,Q}, \mathcal{G}\cup \lbrace 0 \rbrace, \Vert . \Vert_{2,Q}) \leq N(\varepsilon \Vert G \Vert_{2,Q}, \mathcal{G}, \Vert . \Vert_{2,Q}) + 1 \leq \frac{C_2}{\varepsilon} +2.
\end{align*}
Thus
\begin{align*}
D(\varepsilon \Vert G \Vert_{2,Q}, \mathcal{G} \cup \lbrace 0 \rbrace, \Vert . \Vert_{2,Q}) \leq N(\varepsilon \Vert G \Vert_{2,Q} /2, \mathcal{G} \cup \lbrace 0 \rbrace, \Vert . \Vert_{2,Q}) \leq \frac{2C_2}{\varepsilon} +2 .
\end{align*}
Let $D(\varepsilon)= 2 C_2/\varepsilon +2$. Then by Theorem \eqref{Theo-Pollard}(ii), for some fixed $C_3>0$
\begin{align}
\mathbb{E}\sup_{\mathcal{G}} \left\vert \frac{1}{\sqrt{n}} \sum\limits_{i=1}^{n}  g(u_i,x_{i,j})  \right\vert^{2} &\leq \mathbb{E}\sup_{\mathcal{G}\cup \lbrace 0 \rbrace} \left\vert \frac{1}{\sqrt{n}} \sum\limits_{i=1}^{n}  g(u_i,x_{i,j})  \right\vert^{2} \nonumber \\ &\leq C_3 \left( \frac{1}{n} \sum\limits_{i=1}^{n} x_{i,j}^{2} \right)\left(\int\limits_{0}^{1} \left( \log\left(\frac{2 C_2}{\varepsilon} +2\right) \right)^{1/2}d\varepsilon \right)^{2} .
\label{cota-coordenada}
\end{align}
Note that \eqref{cota-coordenada} holds for all $1\leq j \leq p$.
Then, by \eqref{cota-coordenada} and [X1] a), for some fixed $C_4>0$
\begin{align*}
\mathbb{E}\sup_{\mathcal{H}} \left\Vert \frac{1}{\sqrt{n}} \sum\limits_{i=1}^{n}  h(u_i,\mathbf{x}_{i})  \right\Vert^{2} = \mathbb{E}\sup_{s\in[a,b]} \sum\limits_{j=1}^{p} \left\vert \frac{1}{\sqrt{n}}\sum\limits_{i=1}^{n} \psi_{1}\left( \frac{u_i}{s}\right) x_{i,j} \right\vert^2
\\ \leq \sum\limits_{j=1}^{p} \mathbb{E}  \sup_{s\in[a,b]} \left\vert \frac{1}{\sqrt{n}}\sum\limits_{i=1}^{n} \psi_{1}\left( \frac{u_i}{s}\right) x_{i,j} \right\vert^2  
\\ \leq \sum\limits_{j=1}^{p}C_3 \left( \frac{1}{n} \sum\limits_{i=1}^{n} x_{i,j}^{2} \right)  \left(\int\limits_{0}^{1} \left( \log\left(\frac{2 C_2}{\varepsilon} +2\right) \right)^{1/2}d\varepsilon \right)^{2} \leq C_4 p.
\end{align*}
The result now follows from applying Jensen's inequality.
\end{proof}

The following lemma, which is a very simple adaptation of Lemma 3.1 of \citep{Portnoy}, was needed to obtain the rate of consistency of the estimators. Define 
\begin{align*}
H_{i}(\mathbf{x}_{i}^{T}\bbet ) = \inf\left\lbrace \psi_{1}^{'}\left( \frac{u_i-v}{s_0} \right) : \vert v \vert \leq \vert \mathbf{x}_{i}^{T} \bbet \vert \right\rbrace.
\end{align*}

\begin{lemma}
\label{lemma-mod-portnoy}
Assume [R2], [F0], [X1], [X2], [X4] and [X5] hold. Assume $(p \log n)/n \rightarrow 0$. Then there exists $a^{*}>0$ and $\delta>0$ such that
\begin{equation}
\mathbb{P}\left( \inf \left\lbrace \sum\limits_{i=1}^{n} (\mathbf{x}_{i}^{T}\mathbf{z})^{2} H_{i}^{n}(\mathbf{x}_{i}^{T} \bbet)  : \Vert \mathbf{z} \Vert =1, \Vert \bbet \Vert \leq \delta\right\rbrace \geq a^{*}n \right)  \rightarrow 1.
\label{tesis-mod-portnoy}
\end{equation}
\end{lemma}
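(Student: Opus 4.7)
The plan is to localize using a parameter $c>0$ chosen so that the worst-case derivative
\begin{equation}
g_c(u)\;:=\;\inf_{|v|\leq c}\psi_1'\!\left(\frac{u-v}{s_0}\right) \nonumber
\end{equation}
still has strictly positive mean, and then combine this with the geometric conditions [X4] and [X5] together with a uniform law of large numbers for an empirical process indexed by $(\mathbf{z},\bbet)$.

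By [R2], $\psi_1'$ is continuous and bounded, so dominated convergence yields $\mathbb{E}g_c(u)\to\mathbb{E}\psi_1'(u/s_0)$ as $c\to 0^+$; the limit is strictly positive by [R2], so I fix $c>0$ with $\mu:=\mathbb{E}g_c(u)>0$. Applying [X4] and [X5] with this $c$ (taking $C$ to be the maximum of the two constants they produce and $\delta$ to be the minimum), I obtain $a>0,\delta>0,C>0$ such that, for all $\bbet\in\mathcal{B}(\delta)$, $\mathbf{z}\in\mathcal{S}^*$, and $n$, with $J=J(\mathbf{z},\bbet):=I(\bbet,c)\cap I(\mathbf{z},C)$,
\begin{equation}
\sum_{i\in J}(\mathbf{x}_i^T\mathbf{z})^2\geq an \quad\text{and}\quad \sum_{i\notin J}(\mathbf{x}_i^T\mathbf{z})^2\leq \varepsilon n, \nonumber
\end{equation}
where $\varepsilon:=\mu a/(4\|\psi_1'\|_\infty)$. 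The key reduction is that, since $|\mathbf{x}_i^T\bbet|\leq c$ for $i\in J$,
\begin{equation}
H_i^n(\mathbf{x}_i^T\bbet)\;\geq\;\inf_{|v|\leq c}\psi_1'\!\left(\frac{u_i-v}{s_n}\right)\;=:\;h(u_i,c,s_n), \nonumber
\end{equation}
whereas $H_i^n\geq-\|\psi_1'\|_\infty$ uniformly, yielding
\begin{equation}
\sum_i(\mathbf{x}_i^T\mathbf{z})^2 H_i^n(\mathbf{x}_i^T\bbet)\;\geq\;\sum_{i\in J}(\mathbf{x}_i^T\mathbf{z})^2\,h(u_i,c,s_n)\;-\;\|\psi_1'\|_\infty\varepsilon n. \nonumber
\end{equation}
The mean value theorem and the boundedness of $t\psi_1''(t)$ from [R2] give $|h(u,c,s_n)-g_c(u)|\leq K|s_n-s_0|/\min(s_n,s_0)$ uniformly in $u$, and together with $\sum_{i\in J}(\mathbf{x}_i^T\mathbf{z})^2\leq\tau n$ from [X2] this lets me replace $s_n$ by $s_0$ inside $h$ with an error that is $o_P(1)$ uniformly in $(\mathbf{z},\bbet)$.

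The main technical step is then the uniform deviation
\begin{equation}
\sup_{\mathbf{z}\in\mathcal{S}^*,\bbet\in\mathcal{B}(\delta)}\left|\frac{1}{n}\sum_{i=1}^n(\mathbf{x}_i^T\mathbf{z})^2\mathbf{1}\{i\in J(\mathbf{z},\bbet)\}(g_c(u_i)-\mu)\right|\cp 0. \nonumber
\end{equation}
The corresponding class of functions of $(u,\mathbf{x})$ has bounded envelope $2C^2\|\psi_1'\|_\infty$ (because $(\mathbf{x}^T\mathbf{z})^2\leq C^2$ on $J$) and factorizes as a product of the squared-linear class $\{(\mathbf{x}^T\mathbf{z})^2:\mathbf{z}\in\mathcal{S}^*\}$, two indicator classes of linear-halfspace type with VC index of order $p$, and the bounded centered factor $g_c(u)-\mu$. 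As in the proof of Lemma \ref{lemma2-unif-davies}, Lemma 2.6.15 and Theorem 2.6.7 of \citep{vdv}, together with the usual bound for covering numbers of products of uniformly bounded VC classes, yield a uniform-entropy estimate of the form $\log D(\varepsilon)\leq C_1\,p\,(1-\log\varepsilon)$, so Theorem \ref{Theo-Pollard}(i) gives the supremum as $O_P(\sqrt{p/n})=o_P(1)$.

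Combining the pieces, with probability tending to one and uniformly in $(\mathbf{z},\bbet)$,
\begin{equation}
\frac{1}{n}\sum_i(\mathbf{x}_i^T\mathbf{z})^2 H_i^n(\mathbf{x}_i^T\bbet)\;\geq\;\mu a-\|\psi_1'\|_\infty\varepsilon-o_P(1)\;\geq\;\frac{\mu a}{2}, \nonumber
\end{equation}
so the lemma holds with $a^*:=\mu a/2$. The hard part is the empirical-process step: although qualitatively analogous to Lemma \ref{lemma2-unif-davies}, more care is required because the quadratic $(\mathbf{x}^T\mathbf{z})^2$ is not a monotone transformation of a single linear form and because the two indicator factors depend on two distinct linear parameters simultaneously.
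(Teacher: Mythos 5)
Your proposal is correct, but it takes a genuinely different route from the paper's. The paper's proof is a short reduction: it observes that the statement with $H_{i}^{n}$ replaced by the fixed-scale quantity $H_{i}$ (defined with $s_0$ in place of $s_n$) is exactly Lemma 3.1 of \citep{Portnoy} --- this citation is where [X1], [X4], [X5] and the rate $(p\log n)/n\rightarrow 0$ are consumed --- and then shows that substituting $s_n$ for $s_0$ costs only $o_P(1)$ uniformly in $(\mathbf{z},\bbet)$, via the Mean Value Theorem, the boundedness of $t\psi_{1}^{\prime\prime}(t)$ from [R2], the bound $(1/n)\sum_i(\mathbf{x}_i^{T}\mathbf{z})^{2}\leq\tau$ from [X2], and $s_n\cp s_0$. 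You share that scale-perturbation step (your bound $\vert h(u,c,s_n)-g_c(u)\vert\leq K\vert s_n-s_0\vert/\min(s_n,s_0)$ is the paper's argument almost verbatim, adapted to the fixed cutoff $c$), but instead of citing Portnoy you reprove the fixed-scale part from scratch: choosing $c$ by dominated convergence so that $\mu=\mathbb{E}g_c(u)>0$, splitting on $J=I(\bbet,c)\cap I(\mathbf{z},C)$ via [X4] and [X5] (your observation that their conclusions are monotone in $C$ and $\delta$, so a common pair may be used, is correct), and controlling the centered empirical process with Theorem \ref{Theo-Pollard}. This buys two things: the argument is self-contained, and it apparently needs only $p/n\rightarrow 0$ together with a constant envelope, since $(p\log n)/n\rightarrow 0$ and [X1] enter the paper's proof solely through Portnoy's lemma; so your route suggests the conclusion holds under weaker hypotheses. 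What the paper's route buys is brevity and not having to verify the entropy bound you invoke; on that point your sketch is right but slightly glib: the class $\lbrace(\mathbf{x}^{T}\mathbf{z})^{2}\rbrace$ alone is unbounded, so before applying product covering-number bounds you should replace it by $\lbrace\min((\mathbf{x}^{T}\mathbf{z})^{2},C^{2})\rbrace$, which agrees with it on the support of the indicator $I\lbrace\vert\mathbf{x}^{T}\mathbf{z}\vert\leq C\rbrace$; since $t\mapsto\min(t^{2},C^{2})$ is even, bounded and nondecreasing in $\vert t\vert$, the same nondecreasing/nonincreasing splitting used in the proof of Lemma \ref{lemma2-unif-davies} applies to each factor and delivers the estimate $\log D(\varepsilon)\leq C_{1}p(1-\log\varepsilon)$ that your application of Theorem \ref{Theo-Pollard}(i) requires.
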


\begin{proof}
Note that
\begin{align*}
\frac{1}{n}\sum\limits_{i=1}^{n} (\mathbf{x}_{i}^{T}\mathbf{z})^{2} H_{i}^{n}(\mathbf{x}_{i}^{T} \bbet)&=\frac{1}{n} \sum\limits_{i=1}^{n} (\mathbf{x}_{i}^{T}\mathbf{z})^{2} H_{i}(\mathbf{x}_{i}^{T} \bbet) 
\\ &+\frac{1}{n} \sum\limits_{i=1}^{n} (\mathbf{x}_{i}^{T}\mathbf{z})^{2} \left(H_{i}^{n}(\mathbf{x}_{i}^{T} \bbet) - H_{i}(\mathbf{x}_{i}^{T} \bbet) \right).
\end{align*}
Hence, for any $\delta>0$
\begin{align*}
\inf_{\Vert \mathbf{z} \Vert =1, \Vert \bbet  \Vert \leq \delta} \frac{1}{n}\sum\limits_{i=1}^{n} (\mathbf{x}_{i}^{T}\mathbf{z})^{2} H_{i}^{n}(\mathbf{x}_{i}^{T} \bbet) &\geq  \inf_{\Vert \mathbf{z} \Vert =1, \Vert \bbet \Vert \leq \delta} \frac{1}{n} \sum\limits_{i=1}^{n} (\mathbf{x}_{i}^{T}\mathbf{z})^{2} H_{i}(\mathbf{x}_{i}^{T} \bbet) \\ &+ \inf_{\Vert \mathbf{z} \Vert =1, \Vert \bbet \Vert \leq \delta}  \frac{1}{n} \sum\limits_{i=1}^{n} (\mathbf{x}_{i}^{T}\mathbf{z})^{2} \left(H_{i}^{n}(\mathbf{x}_{i}^{T} \bbet) - H_{i}(\mathbf{x}_{i}^{T} \bbet) \right).
\end{align*}
By Lemma 3.1 of \citep{Portnoy}, \eqref{tesis-mod-portnoy} holds when $H_{i}^{n}$ is replaced by $H_{i}$. Hence, for some $a^{*}>0$ and $\delta>0$, for sufficiently large $n$, with arbitrarily high probability
\begin{equation}
\inf_{\Vert \mathbf{z} \Vert =1, \Vert \bbet  \Vert \leq \delta} \frac{1}{n} \sum\limits_{i=1}^{n} (\mathbf{x}_{i}^{T}\mathbf{z})^{2} H_{i}(\mathbf{x}_{i}^{T} \bbet) \geq a^{*}.
\nonumber
\end{equation}
We will show that 
\begin{align*}
\sup_{\Vert \mathbf{z} \Vert =1, \Vert \bbet \Vert \leq \delta}  \left\vert \frac{1}{n} \sum\limits_{i=1}^{n} (\mathbf{x}_{i}^{T}\mathbf{z})^{2} \left(H_{i}^{n}(\mathbf{x}_{i}^{T} \bbet) - H_{i}(\mathbf{x}_{i}^{T} \bbet) \right) \right\vert \cp 0.
\end{align*}
Fix $i\leq n$, $\mathbf{z}$ with $\Vert \mathbf{z} \Vert =1$, and $\bbet$ with $\Vert \bbet \Vert \leq \delta$. We will bound $\left\vert H_{i}^{n}(\mathbf{x}_{i}^{T} \bbet) - H_{i}(\mathbf{x}_{i}^{T} \bbet) \right\vert$. Assume $ H_{i}^{n}(\mathbf{x}_{i}^{T} \bbet) \geq H_{i}(\mathbf{x}_{i}^{T} \bbet)$. By [R2], $H_{i}(\mathbf{x}_{i}^{T} \bbet)= \psi_{1}^{\prime} \left( (u_i - v_{i}^{*})/s_0 \right)$ for some $v_{i}^{*}$ with $\vert v_{i}^{*} \vert \leq \vert \mathbf{x}_{i}^{T} \bbet \vert$. Then
\begin{align*}
\left\vert H_{i}^{n}(\mathbf{x}_{i}^{T} \bbet) - H_{i}(\mathbf{x}_{i}^{T} \bbet) \right\vert &= H_{i}^{n}(\mathbf{x}_{i}^{T} \bbet) - H_{i}(\mathbf{x}_{i}^{T} \bbet) 
\\&= H_{i}^{n}(\mathbf{x}_{i}^{T} \bbet) - \psi_{1}^{\prime} \left( \frac{u_i - v_{i}^{*}}{s_0} \right) \\ &\leq \psi_{1}^{\prime} \left( \frac{u_i - v_{i}^{*}}{s_n} \right) - \psi_{1}^{\prime} \left( \frac{u_i - v_{i}^{*}}{s_0} \right) \\ &\leq \left\vert \psi_{1}^{\prime} \left( \frac{u_i - v_{i}^{*}}{s_n} \right) - \psi_{1}^{\prime} \left( \frac{u_i - v_{i}^{*}}{s_0} \right) \right \vert.
\end{align*}
Note that by [R2], $\phi(t)=\psi_{1}^{\prime\prime}(t)t$ is bounded. Applying the Mean Value Theorem we get that
\begin{align*}
\left\vert \psi_{1}^{\prime} \left( \frac{u_i - v_{i}^{*}}{s_n} \right) - \psi_{1}^{\prime} \left( \frac{u_i - v_{i}^{*}}{s_0} \right) \right \vert &= \left\vert \psi''_{1} \left( \frac{u_i - v_{i}^{*}}{s_{i,n}^{*}}\right) \left( \frac{u_i - v_{i}^{*}}{s_{i,n}^{*}} \right) \right\vert   \left\vert \frac{s_n-s_0}{s_{i,n}^{*}} \right\vert \\ &\leq \Vert \phi\Vert_{\infty} \left\vert \frac{s_n-s_0}{s_{i,n}^{*}} \right\vert.
\end{align*}
where $s_{i,n}^{*}$ is such that $\vert s_{i,n}^{*} - s_0\vert \leq \vert s_n - s_0 \vert$. Note that $s_{i,n}^{*}$ may depend on $\bbet$, say $s_{i,n}^{*}=s_{i,n}^{*}(\bbet)$. The same type of argument can be used to show that an analogous bound holds when $ H_{i}^{n}(\mathbf{x}_{i}^{T} \bbet) \leq H_{i}(\mathbf{x}_{i}^{T} \bbet)$. 

Note that since $s_n \cp s_0$, we have that $\sup_{\Vert \bbet \Vert \leq \delta } \max_{i} \vert s_{i,n}^{*}(\bbet) - s_0 \vert \leq \vert s_n -s_0 \vert \cp 0$. Then
\begin{align*}
&\sup_{\Vert \mathbf{z} \Vert =1, \Vert \bbet  \Vert \leq \delta}  \left\vert \frac{1}{n} \sum\limits_{i=1}^{n} (\mathbf{x}_{i}^{T}\mathbf{z})^{2} \left(H_{i}^{n}(\mathbf{x}_{i}^{T} \bbet) - H_{i}(\mathbf{x}_{i}^{T} \bbet) \right) \right\vert \\ &\leq 
\sup_{\Vert \mathbf{z} \Vert =1, \Vert \bbet \Vert \leq \delta} \frac{1}{n} \sum\limits_{i=1}^{n} (\mathbf{x}_{i}^{T}\mathbf{z})^{2} \left\vert H_{i}^{n}(\mathbf{x}_{i}^{T} \bbet) - H_{i}(\mathbf{x}_{i}^{T} \bbet) \right\vert  \\ &\leq \sup_{\Vert \mathbf{z} \Vert =1, \Vert \bbet \Vert \leq \delta}\Vert \phi\Vert_{\infty} \max_{i}  \frac{1}{\left\vert s_{i,n}^{*}(\bbet) \right\vert}  \left\vert s_n-s_0 \right\vert \frac{1}{n} \sum\limits_{i=1}^{n} (\mathbf{x}_{i}^{T}\mathbf{z})^{2} \\ &\leq \sup_{\Vert \bbet \Vert \leq \delta} \Vert \phi\Vert_{\infty} \max_{i} \frac{1}{\vert s_{i,n}^{*}(\bbet) \vert }\left\vert s_n-s_0 \right\vert \tau \cp 0.
\end{align*}
It follows that for sufficiently large $n$, with arbitrarily high probability,
\begin{equation}
\inf \left\lbrace \sum\limits_{i=1}^{n} (\mathbf{x}_{i}^{T}\mathbf{z})^{2} H_{i}^{n}(\mathbf{x}_{i}^{T} \bbet)  : \Vert \mathbf{z} \Vert =1, \Vert \bbet \Vert \leq \delta\right\rbrace \geq n(a^{*} - a^{*}/2),
\nonumber
\end{equation}
and so the lemma is proven.
\end{proof}

The following lemma was needed in the proof of Theorem \ref{Theo-Orac}.

\begin{lemma}
\label{lemma-tight}
Assume [R2], [F0], [X1], [X2], [X3] and [X6]  hold. Let $\mathbf{a}_n\in\mathbb{R}^{p}$, $\Vert \mathbf{a}_{n}%
\Vert = 1$. Let $r_{n}^{2}= 
\mathbf{a}_{n}^{T}\boldsymbol{\Sigma}_{n}^{-1}\mathbf{a}_{n}$. Then
\begin{itemize}
\item[a)] \begin{equation}
\frac{1}{\sqrt{n}}\sum\limits_{i=1}^{n} \left(\psi_1 \left( \frac{u_i}{s_n}
\right) - \psi_1 \left( \frac{u_i}{s_0} \right)\right) \left(\mathbf{a}_{n}^{T} \boldsymbol{\Sigma}_{n}^{-1} \mathbf{x%
}_{i}\right) \overset{P}{\rightarrow} 0.  \notag
\end{equation}

\item[b)] \begin{equation}
\frac{1}{r_n \sqrt{n}}\sum\limits_{i=1}^{n} \psi_1 \left( \frac{u_i}{s_0} \right) \mathbf{a}_{n}^{T} \boldsymbol{\Sigma}_{n}^{-1}\mathbf{x%
}_{i} \overset{%
d}{\rightarrow} N \left( 0,\mathbb{E} \psi_{1}^{2} \left(\frac{u}{s_0}\right) \right).
\nonumber
\end{equation}
\end{itemize}
\end{lemma}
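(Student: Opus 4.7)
The plan is to prove (b) first by a Lindeberg--Feller CLT for a weighted triangular array, and then to prove (a) by establishing stochastic equicontinuity, at $s_0$, of the process $V_n(s) = (1/\sqrt{n})\sum_i(\psi_1(u_i/s) - \psi_1(u_i/s_0))\,\mathbf{a}_n^T\boldsymbol{\Sigma}_n^{-1}\mathbf{x}_i$, via the maximal inequality of Theorem \ref{Theo-Pollard}(ii).

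For (b), write $c_i = \mathbf{a}_n^T\boldsymbol{\Sigma}_n^{-1}\mathbf{x}_i$ and $Y_{i,n} = \psi_1(u_i/s_0)c_i/(r_n\sqrt{n})$. Oddness of $\psi_1$ combined with [F0] gives $\mathbb{E}Y_{i,n}=0$, and $\sum_i c_i^2 = n r_n^2$ yields $\sum_i\mathrm{Var}(Y_{i,n}) = a(\psi_1)$ exactly. By [X2] together with [X1], [X3] and Lemma \ref{lemma:cond_davies}, $r_n$ is bounded above and bounded away from $0$. Then $|Y_{i,n}| \leq \|\psi_1\|_\infty \|\boldsymbol{\Sigma}_n^{-1}\|\,\|\mathbf{x}_i\|/(r_n\sqrt{n}) \leq C\|\mathbf{x}_i\|/\sqrt{n}$, and [X6] gives $\max_i\|\mathbf{x}_i\|/\sqrt{n}\to 0$, so the Lindeberg condition holds trivially.

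For (a), the quantity equals $V_n(s_n)$, and it suffices to show $\sup_{|s-s_0|\leq\delta}|V_n(s)|$ is $O_P(\delta)$ uniformly in $n$, as this combines with $s_n\cp s_0$ to give $V_n(s_n)\cp 0$. Fix $0<\delta<s_0/2$ and set
$$\mathcal{H}_\delta = \Bigl\{(u,\mathbf{x})\mapsto \bigl(\psi_1(u/s) - \psi_1(u/s_0)\bigr)\,\mathbf{a}_n^T\boldsymbol{\Sigma}_n^{-1}\mathbf{x}:\ |s-s_0|\leq \delta\Bigr\}.$$
Since $(d/ds)\psi_1(u/s) = -\phi_1(u/s)/s$ with $\phi_1(t)=t\psi_1'(t)$ bounded by [R2], $\mathcal{H}_\delta$ admits envelope $G(u,\mathbf{x})\leq C\delta\,|\mathbf{a}_n^T\boldsymbol{\Sigma}_n^{-1}\mathbf{x}|$, so $(1/n)\sum_i\mathbb{E}G^2(u_i,\mathbf{x}_i) \leq C^2\delta^2 r_n^2 = O(\delta^2)$. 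The Lipschitz bound $|\psi_1(u/s)-\psi_1(u/s')|\leq C|s-s'|$ combined with Theorem 2.7.11 of \citep{vdv}, along the same lines as in the proof of Lemma \ref{lemma-orden1}, shows that the capacity numbers of $\mathcal{H}_\delta$ normalised by $\|G\|_{2,Q}$ are bounded by a $\delta$-independent function $C/\varepsilon + 1$. Noting that $\mathbb{E}h_s(u,\mathbf{x}_i)=0$ for every $h_s\in\mathcal{H}_\delta$ (by oddness of $\psi_1$ and [F0]), Theorem \ref{Theo-Pollard}(ii) applied to $\mathcal{H}_\delta\cup\{0\}$ yields $\mathbb{E}\sup_{|s-s_0|\leq\delta}|V_n(s)|^2 \leq C'\delta^2$. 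Then for any $\eta>0$, $\mathbb{P}(|V_n(s_n)|>\eta) \leq \mathbb{P}(|s_n-s_0|>\delta) + C'\delta^2/\eta^2$, and letting $n\to\infty$ and then $\delta\to 0$ concludes (a).

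The main technical obstacle is ensuring that the capacity/bracketing bound scales correctly in $\delta$: both the envelope $G$ and the Lipschitz seminorm of $\mathcal{H}_\delta$ scale linearly in $\delta$, so the capacity number normalised by $\|G\|_{2,Q}$ is $\delta$-independent. This is precisely what lets the factor $\delta^2$ from $\mathbb{E}G^2$ survive the application of Theorem \ref{Theo-Pollard}(ii) and supplies the extra $\delta$ needed to absorb $s_n\cp s_0$ (which carries no rate on its own) into a genuine $o_P(1)$ conclusion; a direct mean-value-theorem bound on the difference $\psi_1(u/s_n)-\psi_1(u/s_0)$, without this cancellation, only produces the insufficient estimate $|T_n|=O_P(|s_n-s_0|\sqrt{n}\,r_n)$.
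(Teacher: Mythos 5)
Your proof is correct, and while part (b) coincides with the paper's argument, part (a) takes a genuinely different route. For (b), the paper also applies the Lindeberg--Feller theorem to the weighted array, using $\sum_i(\mathbf{a}_n^T\boldsymbol{\Sigma}_n^{-1}\mathbf{x}_i)^2 = n r_n^2$, the boundedness of $r_n$ and $1/r_n$ (via [X2] together with Lemma \ref{lemma:cond_davies}), and [X6] to kill the Lindeberg term exactly as you do. For (a), the paper instead parametrizes the scale by $t\in[0,1]$ through $s = 0.5s_0 + t s_0$, and proves tightness of the resulting process $G_n$ in $C[0,1]$ via Billingsley's moment criterion (Theorem 12.3), verifying $\mathbb{E}(G_n(t_1)-G_n(t_2))^2 \leq K(t_2-t_1)^2$ with the same mean-value bound on $\phi_1(t)=t\psi_1'(t)$ that you use; tightness plus $s_n \cp s_0$ then yields the conclusion. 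Your substitute is a direct stochastic-equicontinuity bound via the paper's own maximal inequality, Theorem \ref{Theo-Pollard}(ii), applied to the $\delta$-scaled difference class $\mathcal{H}_\delta \cup \{0\}$; your key observation---that both the envelope and the Lipschitz seminorm of $\mathcal{H}_\delta$ scale linearly in $\delta$, so the normalized capacity numbers are $\delta$-independent and the factor $\delta^2$ survives to give $\mathbb{E}\sup_{|s-s_0|\leq\delta}|V_n(s)|^2 \leq C'\delta^2$ uniformly in $n$---is exactly what compensates for the fact that $s_n \cp s_0$ carries no rate, and your closing union bound makes this absorption fully explicit. What each approach buys: the paper's route is shorter given Billingsley as an external reference and in principle delivers process-level weak convergence on the fixed interval of scales, whereas yours is more self-contained (it reuses Theorem \ref{Theo-Pollard} and the bracketing maneuver of Lemma \ref{lemma-orden1} already in the appendix) and quantitative, replacing an appeal to tightness of a sequence in $C[0,1]$ evaluated at a random index by an elementary Chebyshev estimate. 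Both arguments rest on the same two pillars: boundedness of $t\psi_1'(t)$ from [R2] and uniform control of $r_n$ from [X1]--[X3] and [X2].
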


\begin{proof}
We first prove a). For $t\in[0,1]$ let 
\begin{align*}
G_n(t) = \frac{1}{\sqrt{n}}\sum_{i=1}^{n} \psi_1 \left( \frac{u_i}{
0.5s_0+ts_0} \right) \mathbf{a}_{n}^{T} \boldsymbol{\Sigma}_{n}^{-1}\mathbf{x}_{i}.
\end{align*}
Since by assumption $s_n \overset{P}{\rightarrow} s_0$, it suffices to show that $%
(G_n)_n$ is a tight sequence in $C[0,1]$. By Theorem 12.3 of \citep{Billingsley}, it suffices to show that

\begin{enumerate}
\item[(i)] $G_n(0)$ is tight

\item[(ii)] There exists $\gamma \geq 0$, $\alpha>1$ and a nondecreasing, continuous
function $f$ on $[0,1]$, such that for any $0\leq t_1 \leq t_2 \leq 1$ and
any $\lambda>0$ we have 
\begin{equation}
\mathbb{P} \left( \vert G_n(t_2) - G_n(t_1) \vert \geq \lambda \right) \leq 
\frac{1}{\lambda^{\gamma}} \left( f(t_2) - f(t_1) \right)^{\alpha} \text{ for all } n.
\nonumber
\end{equation}
\end{enumerate}

We first prove (i). Let $
h_{n}^{2}= \mathbb{E} \psi_{1}^{2} \left(u/(0.5s_0)\right) 
\mathbf{a}_{n}^{T}\boldsymbol{\Sigma}_{n}^{-1}\mathbf{a}_{n}$. 
By [X1], [X3] and Lemma \ref{lemma:cond_davies}, $\inf_{n} \rho_{1,n}>0$. This together with [X2] implies that $h_n$ and $1/h_n$ are bounded. Note that since $
\psi_1$ is odd and the errors have a symmetric distribution, $\mathbb{E} \psi_1 \left( u/(0.5s_0) \right) = 0$.
Also, \begin{align*}
\sum\limits_{i=1}^{n} \mathbb{E} \left( \frac{1}{\sqrt{n}} \psi_1 \left( \frac{u}{0.5s_0} \right) \mathbf{a}_{n}^{T} \boldsymbol{\Sigma}_{n}^{-1}\mathbf{x}_{i} \right)^2 =
h_{n}^{2}.
\end{align*}
Note that by [X6] $\max_{i\leq n} (\mathbf{a}_{n}^{T} \boldsymbol{\Sigma}_{n}^{-1}\mathbf{x}_{i}) / (\sqrt{n} h_n)
\rightarrow 0$. Then for any fixed $\varepsilon>0$, 
\begin{align*}
\sum\limits_{i=1}^{n}\mathbb{E}\left( \frac{1}{\sqrt{n} h_n} \psi_1 \left( 
\frac{u}{0.5s_0} \right) \mathbf{a}_{n}^{T} \boldsymbol{\Sigma}_{n}^{-1}\mathbf{x}_{i} \right)^{2} I \left\lbrace  \left\vert \psi_1 \left( 
\frac{u}{0.5s_0} \right)(\mathbf{a}_{n}^{T} \boldsymbol{\Sigma}_{n}^{-1}\mathbf{x}_{i}) / (\sqrt{n} h_n)\right\vert>\varepsilon \right\rbrace
\\ \rightarrow 0.
\end{align*}
Hence, by the Lindberg-Feller Theorem, $G_n(0) /h_n \overset{%
d}{\rightarrow} N(0,1)$ and (i) follows. Note that roughly the same argument proves b).

Now, we prove (ii). By Tchebyshev's inequality, it suffices to show that
there exists $K>0$ such that for all $t_1 , t_2$ in $[0,1]$, $\mathbb{E}
(G_n(t_1) -G_n(t_2))^{2} \leq K (t_2-t_1)^2$ for all $n$. Let 
\begin{equation}
\Delta_i (t_1,t_2) = \psi_1 \left( \frac{u_i}{
0.5s_0+t_1 s_0} \right)  -\psi_1 \left( \frac{u_i}{
0.5s_0+t_2 s_0} \right).
\nonumber
\end{equation}
Note that $\mathbb{E}\Delta_i(t_1,t_2)=0$ for all $t_1, t_2$ and $i$.
Using the independence of $u_1, \dots, u_n$, we get
\begin{align}
\mathbb{E} (G_n(t_1) -G_n(t_2))^{2} &= \frac{1}{n}\sum_{i,j} \mathbb{E}   \Delta_i(t_1,t_2)\Delta_j(t_1,t_2)  (\mathbf{a}_{n}^{T} \boldsymbol{\Sigma}_{n}^{-1}\mathbf{x}_{i}) (\mathbf{a}_{n}^{T} \boldsymbol{\Sigma}_{n}^{-1}\mathbf{x}_{i})\nonumber\\ &= \mathbb{E}\Delta_{1}(t_1,t_2)^{2} \frac{1}{n} \sum\limits_{i=1}^{n} (\mathbf{a}_{n}^{T} \boldsymbol{\Sigma}_{n}^{-1}\mathbf{x}_{i})^{2}
\nonumber\\ &=
\mathbb{E} \left( \psi_1 \left( \frac{u}{0.5s_0+t_1 s_0} \right) -
\psi_1 \left( \frac{u}{0.5s_0+t_2 s_0} \right) \right)^{2} \mathbf{a}_{n}%
^{T} \boldsymbol{\Sigma}_{n}^{-1} \mathbf{a}_{n}
\label{desig-fin-tight}.
\end{align}
Let $\phi_1(t)=\psi_{1}^{\prime}(t)t$. By [R2] $\phi_1$ is bounded. Applying the Mean Value Theorem we get that
\begin{align*}
&\left\vert \psi_1 \left( \frac{u}{0.5s_0+t_1 s_0} \right) - \psi_1 \left( \frac{u}{0.5s_0+t_2 s_0} \right) \right\vert \\ &= \left\vert \psi_{1}^{\prime} \left( \frac{u}{0.5s_0+t^{*} s_0} \right) \left( \frac{u}{0.5s_0+t^{*} s_0} \right) \left( \frac{s_0}{0.5s_0+t^{*} s_0}\right) \left( t_1 - t_2\right) \right\vert \\ &\leq 2 \Vert \phi_{1}\Vert_{\infty} \left\vert  \left( t_1 -t_2 \right) \right\vert,
\end{align*}
where $t^{*}$ lies between $t_1$ and $t_2$.

Hence, for some fixed constant $C>0$
\begin{align*}
\mathbb{E} \left( \psi_1 \left( \frac{u}{0.5s_0+t_1 s_0} \right) -
\psi_1 \left( \frac{u}{0.5s_0+t_2 s_0} \right) \right)^{2} \leq C (t_2-t_1)^{2}.
\end{align*}
Hence, since $\inf_{n} \rho_{1,n}>0$, from \eqref{desig-fin-tight} it follows that (ii) holds and thus the lemma is proven.
\end{proof}

The following lemma was needed in the proof of Theorem \ref{Theo-Orac}. Its proof is very similar to that of Lemma \ref{lemma-orden1} and for this reason it is ommitted.
\begin{lemma}
\label{lemma-matriz}
Assume [R2], [F0] and [X1] a) hold. Let $0<a<b$. Then for some fixed constant $A>0$ that depends only on $a,b$, $\psi_{1}^{\prime}$ and the constant that appears in [X1] a),
\begin{align*}
\mathbb{E}\sup_{s\in [a,b]} \left\Vert (\frac{1}{\sqrt{n}}) \sum\limits_{i=1}^{n} \left( \psi_{1}^{\prime}\left( \frac{u_{i}}{s}\right) - \mathbb{E}\psi_{1}^{\prime}\left(\frac{u}{s}\right)\right)  \mathbf{x}_{i}\mathbf{x}_{i}^{T} \right\Vert_{F} \leq A \sqrt{p} \max_{i\leq n} \Vert \mathbf{x}_{i}\Vert,
\end{align*}
where $\Vert . \Vert_{F}$ is the Frobenius norm.
\end{lemma}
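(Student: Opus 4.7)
The plan is to mirror the proof of Lemma \ref{lemma-orden1} coordinatewise, exploiting the identity $\|M\|_F^2 = \sum_{j,k} M_{j,k}^2$. Writing
\begin{align*}
\left\Vert \frac{1}{\sqrt{n}} \sum_{i=1}^{n} \left( \psi_{1}^{\prime}\left( \frac{u_{i}}{s}\right) - \mathbb{E}\psi_{1}^{\prime}\left(\frac{u}{s}\right)\right) \mathbf{x}_i \mathbf{x}_i^T \right\Vert_F^2 = \sum_{j,k=1}^{p} \left( \frac{1}{\sqrt{n}} \sum_{i=1}^{n} \left(\psi_1'\!\left(\frac{u_i}{s}\right) - \mathbb{E}\psi_1'\!\left(\frac{u}{s}\right)\right) x_{i,j} x_{i,k} \right)^{2},
\end{align*}
the task reduces to controlling, uniformly in $s\in[a,b]$, a scalar centered empirical process indexed by each pair $(j,k)$, and then summing the resulting bounds.

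For each fixed pair $(j,k)$, I would consider the class $\mathcal{G}_{j,k}=\{g_s(u,z)=\psi_1'(u/s)\,z : s\in[a,b]\}$ evaluated at the data points $(u_i, x_{i,j}x_{i,k})$, with constant envelope $G(u,z)=\Vert\psi_1'\Vert_\infty |z|$ (bounded by [R2]). By [R2], $\phi(t)=t\psi_1''(t)$ is bounded, so the Mean Value Theorem gives the Lipschitz estimate $|g_{s_1}(u,z)-g_{s_2}(u,z)| \leq a^{-1}\Vert\phi\Vert_\infty|s_1-s_2||z|$. Exactly the argument of Lemma \ref{lemma-orden1}, via Theorem 2.7.11 of \citep{vdv} and the covering number of $[a,b]$, yields $N(\varepsilon\Vert G\Vert_{2,Q},\mathcal{G}_{j,k},\Vert\cdot\Vert_{2,Q})\leq C_1/\varepsilon +1$ uniformly in $Q$ and in $(j,k)$, since the Lipschitz constant depends only on $a,b$ and $\psi_1$.

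Applying Theorem \ref{Theo-Pollard}(ii) to $\mathcal{G}_{j,k}\cup\{0\}$ gives, for some $C_2>0$ depending only on $a,b$ and $\psi_1$,
\begin{align*}
\mathbb{E}\sup_{s\in[a,b]}\left|\frac{1}{\sqrt{n}}\sum_{i=1}^{n} \left(\psi_1'\!\left(\frac{u_i}{s}\right)-\mathbb{E}\psi_1'\!\left(\frac{u}{s}\right)\right) x_{i,j}x_{i,k}\right|^{2} \leq C_2\,\frac{1}{n}\sum_{i=1}^{n} x_{i,j}^{2} x_{i,k}^{2}.
\end{align*}
Summing this inequality over $1\leq j,k\leq p$ (after bounding $\mathbb{E}\sup$ by the sum of $\mathbb{E}\sup$'s) and using $\sum_{j,k} x_{i,j}^2 x_{i,k}^2 = \Vert\mathbf{x}_i\Vert^4$ gives
\begin{align*}
\mathbb{E}\sup_{s\in[a,b]}\left\Vert\frac{1}{\sqrt{n}}\sum_{i=1}^{n}\left(\psi_1'\!\left(\frac{u_i}{s}\right)-\mathbb{E}\psi_1'\!\left(\frac{u}{s}\right)\right)\mathbf{x}_i\mathbf{x}_i^T\right\Vert_F^{2} \leq C_2\,\frac{1}{n}\sum_{i=1}^{n}\Vert\mathbf{x}_i\Vert^{4} \leq C_2 M p\max_{i\leq n}\Vert\mathbf{x}_i\Vert^{2},
\end{align*}
where the last inequality factors out $\max_i\Vert\mathbf{x}_i\Vert^2$ and applies [X1] a). Jensen's inequality then yields the conclusion with $A=\sqrt{C_2 M}$.

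I do not expect any genuine obstacle beyond what was already resolved in Lemma \ref{lemma-orden1}; the only point worth checking carefully is that the entropy estimate is uniform in $(j,k)$, which follows because the Lipschitz constant in $s$ is data-free and the covering number enters only relative to the $L^2(Q)$ norm of the envelope.
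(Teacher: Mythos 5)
Your proof is correct and follows precisely the route the paper intends: the paper omits the proof of Lemma \ref{lemma-matriz}, stating only that it is very similar to that of Lemma \ref{lemma-orden1}, and your coordinatewise reduction via $\Vert \mathbf{M}\Vert_{F}^{2}=\sum_{j,k}M_{j,k}^{2}$, the Lipschitz-in-$s$ entropy bound from Theorem 2.7.11 of \citep{vdv} (using the boundedness of $t\psi_{1}^{\prime\prime}(t)$ from [R2]), and Theorem \ref{Theo-Pollard}(ii) applied to $\mathcal{G}_{j,k}\cup\lbrace 0\rbrace$ with fixed points $z_i=x_{i,j}x_{i,k}$ is exactly that argument adapted to the matrix entries. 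The concluding estimate $(1/n)\sum_{i=1}^{n}\Vert\mathbf{x}_i\Vert^{4}\leq pM\max_{i\leq n}\Vert\mathbf{x}_i\Vert^{2}$ via [X1] a), followed by Jensen's inequality, correctly delivers the claimed bound $A\sqrt{p}\max_{i\leq n}\Vert\mathbf{x}_i\Vert$.
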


\section*{Acknowledgments}
Work supported in part by Grant PIP 112-201101-00339 from CONICET and by a CONICET Doctoral Fellowship.
This paper is based on the author's Ph.D. dissertation dissertation at the University of Buenos Aires. The author would like to express his gratitude to Graciela Boente, Daniela Rodriguez, Mariela Sued and Victor J. Yohai for their support, encouragement and helpful suggestions.

\bibliography{div}

\Addresses
\end{document}